\newtheorem{teo}{Theorem}[section]
\newtheorem{prop}[teo]{Proposition}
\newtheorem{lem}[teo]{Lemma}
\newtheorem{pro}[teo]{Problem}
\newtheorem{rem}[teo]{Remark}
\newtheorem{ejem}[teo]{Example}
\newcommand{\N}{\mathbb N}
\newcommand{\R}{\mathbb R}
\renewcommand{\H}{\mathcal{H}}
\newcommand{\G}{\mathcal G}
\newcommand{\hh}{\bm{\mathcal{H}}}
\newcommand{\HH}{{\bm{\mathcal{H}}}}
\newcommand{\vv}{{\bm V}}
\newcommand{\VV}{{\bm V}}
\newcommand{\Ss}{{\bm S}}
\newcommand{\TT}{{\bm T}}
\newcommand{\MM}{{\bm M}}
\newcommand{\WW}{{\bm W}}
\newcommand{\Id}{{\bf Id}}
\newcommand{\id}{\textnormal{Id}}
\newcommand{\x}{\bm x}
\newcommand{\s}{\sigma}
\newcommand{\T}{\tau}
\newcommand{\ran}{\textnormal{ran}\,}
\newcommand{\dom}{\textnormal{dom}\,}
\newcommand{\zer}{\textnormal{zer}}
\newcommand{\fix}{\textnormal{Fix}\,}
\newcommand{\gra}{\textnormal{gra}\,}
\newcommand{\rvv}{{\ran\, \vv}}
\newcommand{\rv}{{\ran\, V}}
\newcommand{\argm}[1]{\underset{#1}{\argmin\, }}
\newcommand{\scal}[2]{{\left\langle{{#1}\mid{#2}}\right\rangle}}
\newcommand{\pscal}[2]{\langle\!\langle{#1}\mid{#2}\rangle\!\rangle}
\newcommand{\menge}[2]{\big\{{#1}~\big |~{#2}\big\}} 
\newcommand{\pinf}{\ensuremath{{+\infty}}}
\newcommand{\RR}{\ensuremath{\mathbb{R}}}
\newcommand{\RPP}{\ensuremath{\left]0,+\infty\right[}}
\newcommand{\RX}{\ensuremath{\left]-\infty,+\infty\right]}}
\newcommand{\prox}{\ensuremath{\text{\rm prox}}}
\newcommand{\weakly}{\ensuremath{\:\rightharpoonup\:}}
\numberwithin{equation}{section}
\DeclareSymbolFont{fouriersymbols}{FMS}{futm}{m}{n}
\DeclareSymbolFont{fourierlargesymbols}{FMX}{futm}{m}{n}
\DeclareMathDelimiter{\nr}{\mathord}{fouriersymbols}{152}{fourierlargesymbols}{147}
\DeclareMathOperator*{\argmin}{arg\,min}
\DeclareMathDelimiter{\nr}{\mathord}{fouriersymbols}{152}{fourierlargesymbols}{147}
\title[Primal-dual splittings in the range of 
linear operators]{Primal-dual splittings as fixed point iterations in the range of 
linear operators}
\author{Luis M. Brice\~no-Arias \& Fernando Rold\'an}
\address{Departamento de Matem\'{a}tica, Universidad T\'{e}cnica Federico Santa Mar\'{i}a, Avenida Espa\~{n}a 1680, Valpara\'{i}so, Chile}
\email{luis.briceno@usm.cl, fernando.roldan@usm.cl}
\subjclass[2010]{47H05, 47H10, 65K05, 65K15, 90C25, 49M29.}
\begin{document}

\begin{abstract}
In this paper we study the relaxed primal-dual 
algorithm for solving composite 
monotone inclusions in real Hilbert spaces with
critical preconditioners. 
Our approach is based in new results on the asymptotic behaviour of
Krasnosel'ski\u{\i}-Mann (KM) iterations defined in the range of 
monotone self-adjoint linear operators. These results generalize
the convergence of classical KM iterations aiming at approximating 
fixed points. 
We prove that the relaxed primal-dual algorithm 
with critical preconditioners define KM iterations in the range of a 
particular monotone self-adjoint linear operator with non-trivial kernel.
We then deduce from our fixed point approach that the shadows of 
primal-dual iterates on the range 
of the linear operator converges weakly to some point in this vector 
subspace from which we obtain a 
solution. This generalizes \cite[Theorem~3.3]{condat} to infinite 
dimensional relaxed primal-dual monotone inclusions involving critical 
preconditioners.
The Douglas-Rachford splitting (DRS) is interpreted as a particular 
instance of the primal-dual algorithm 
when the step-sizes are critical and we recover classical results from 
this new perspective. We 
implement the relaxed primal-dual algorithm with critical 
preconditioners in total variation reconstruction and we 
illustrate its flexibility and efficiency.
\par
\bigskip

\noindent \textbf{Keywords.} {\it convex 
optimization \and Douglas--Rachford splitting \and 
Krasnosel'ski\u{\i}-Mann iterations \and
monotone operator  theory \and primal-dual algorithm \and 
quasinonexpansive operators.}
\end{abstract}

\maketitle
\section{Introduction}
In this paper we provide a theoretical study of the relaxed primal-dual 
splitting
\cite{vu} for solving the following composite monotone inclusion. 
\begin{pro}\label{prob2}
	Let $\H$ and $\G$ be a real Hilbert spaces, let $A : \H \rightarrow 2^{\H}$ and $B : \G 
	\rightarrow 2^{\G}$  be  maximally monotone operators, and let $L : \H \rightarrow \G$ be a 
	linear 
	bounded operator. The problem is to find $(\hat{x},\hat{u}) \in \bm{Z}$,
	where
	\begin{equation}\label{Z}
	\bm{Z}=\menge{(x,u) \in \H \times \G}{0 \in Ax+ L^*u,\:
		0 \in B^{-1}u-Lx}
	\end{equation}
	is assumed to be non-empty.
\end{pro}
It follows from \cite[Proposition~2.8]{skew} that 
any solution $(\hat{x},\hat{u})$ to Problem~\ref{prob2} satisfies 
that $\hat{x}$ is a solution to the primal inclusion
\begin{equation}
\label{e:priminc}
\text{find}\quad x\in\H\quad\text{such that}\quad 0\in Ax+L^*BLx
\end{equation}
and $\hat{u}$ is solution to the dual inclusion
\begin{equation}
\label{e:dualinc}
\text{find}\quad u\in\G\quad\text{such that}\quad 0\in 
B^{-1}u-LA^{-1}(-L^*u).
\end{equation}
Conversely, if $\hat{x}$ is a solution to \eqref{e:priminc} then
there exists $\tilde{u}$ solution to \eqref{e:dualinc} such that
$(\hat{x},\tilde{u})\in\bm{Z}$ and the dual argument also holds.
In the particular case when $A=\partial f$ and 
$B=\partial g$ for lower 
semicontinuous  
convex proper functions $f\colon\H\to\RX$ and  $g\colon\G\to\RX$, 
we have that $\bm{Z}\subset \mathcal{P}\times\mathcal{D}$,
where $\mathcal{P}$ is the set of solutions to 
the convex optimization 
problem 
\begin{equation}
\label{e:optim}
\min_{x\in\H}f(x)+g(Lx)
\end{equation}
and $\mathcal{D}$ is the set of solutions to its Fenchel-Rockafellar 
dual
\begin{equation}
\label{e:dual}
\min_{u\in\G}g^*(u)+f^*(-L^*u).
\end{equation}
Problem~\ref{prob2} and its particular 
optimization case model a wide class of problems in engineering 
going from from mechanical problems 
\cite{gabay83,GlowinskyMorrocco75,Goldstein64}, differential 
inclusions \cite{Aubin,Showalter}, game theory \cite{Nash}, 
image processing 
problems as image restoration and denoising 
\cite{ChambLions97,Pustelnik2019,Daube04}, traffic theory 
\cite{Nets1,Fuku96,GafniBert84}, 
among 
other disciplines. 

 In the last years, several algorithms have been proposed in 
order to solve Problem~\ref{prob2} and some generalizations 
involving cocoercive operators \cite{skew,vu,yuan}.
One of the most used, is the algorithm proposed in \cite{vuvarmet}
(see also \cite{vu,BOT2,bot2013} and \cite{CPockDiag,cp} in the 
context of \eqref{e:optim}), which iterates
\begin{equation}
\label{primaldualalgorithm}
(\forall n\in\N)\quad 
	\begin{array}{l}
	\left\lfloor
	\begin{array}{l}
p_{n+1}=J_{\Upsilon A}(x_n-\Upsilon L^*u_n)\\
q_{n+1}=J_{\Sigma B^{-1}}\big( u_n+\Sigma L(2p_{n+1}-x_n)\big)\\
(x_{n+1},y_{n+1})=(1-\lambda_n)(x_n,y_n)+\lambda_n(p_{n+1},q_{n+1}),
\end{array}
\right.
\end{array}
\end{equation}
where $(x_0 , u_0) \in \H\times\G$, $(\lambda_n)_{n\in\N}\subset 
\left]0,2\right[$,
and preconditioners $\Upsilon\colon\H\to\H$ and 
$\Sigma\colon\G\to\G$ are strongly 
monotone self-adjoint linear bounded operators such that
$\|\sqrt{\Sigma} 
L\sqrt{\Upsilon}\|<1$. 
It turns out that \eqref{primaldualalgorithm}
corresponds to the relaxed proximal-point algorithm 
\cite{rockafellar1976,martinet1970} associated 
to the operator $\vv^{-1}\bm{M}$,
where 
$\bm{M}\colon(x,u)\mapsto (Ax+L^*u)\times (B^{-1}u-Lx)$ is
maximally monotone in $\H\oplus\G$
 \cite[Proposition~2.7(iii)]{skew}
and the self-adjoint linear
 bounded operator
\begin{equation} \label{vv}
\vv: \hh \rightarrow \hh :(x,u) \mapsto \left( 
\Upsilon^{-1}x-L^*u,\Sigma^{-1}u-Lx\right)
\end{equation} 
is strongly monotone if $\|\sqrt{\Sigma} 
L\sqrt{\Upsilon}\|<1$. Hence, 
$\vv^{-1}\bm{M}$ is maximally monotone in the real Hilbert space
$(\H\times\G,\scal{\cdot}{\vv\cdot})$ and the convergence is a 
consequence of \cite{rockafellar1976,martinet1970}.
Note that $J_{\bm{M}}$ is also firmly 
nonexpansive in $\H\oplus\G$, however it has no 
explicit computation.
Non-standard metrics are widely used not only 
to obtain explicit resolvent computations but also to accelerate 
algorithms 
\cite{Siopt2,condat,yuan,vu,Davisconvrate,combvu}. In the 
presence of critical preconditioners, i.e., $\|\sqrt{\Sigma} 
L\sqrt{\Upsilon}\|=1$, the non-standard metric approach 
fails since $\ker\vv\neq\{0\}$ and, hence,  
$\scal{\cdot}{\vv\cdot}$ is not an inner product in $\H\times\G$.
Hence, the convergence of \eqref{primaldualalgorithm} when 
$\|\sqrt{\Sigma} 
L\sqrt{\Upsilon}\|\le 1$ in the 
context of Problem~\ref{prob2} is an open question, which is partially 
answered in \cite[Theorem~3.3]{condat} for solving \eqref{e:optim}
in a finite dimensional setting when $\Sigma=\s\id$ and 
$\Upsilon=\T\id$. In the particular case when $\lambda_n\equiv 1$,
the weak convergence of \eqref{primaldualalgorithm} when 
$\|\sqrt{\Sigma} L\sqrt{\Upsilon}\|\le 1$ is deduced in \cite{SDR} from 
an alternative formulation of \eqref{primaldualalgorithm}. 
This formulation in the case when $L=\id$, generates primal-dual 
iterates in the graph of $A$ and, therefore, the argument does not 
hold when relaxation steps are included in general. 

In this paper we generalize \cite[Theorem~3.3]{condat} to the 
monotone inclusion in Problem~\ref{prob2} in the infinite dimensional 
setting with critical preconditioners. Our approach is based on
a fixed point theory restricted to $(\rvv,\scal{\cdot}{\vv\cdot})$, which is 
a real Hilbert space under the condition $\rvv$ closed. 
We obtain the weak convergence of  Krasnosel'ski\u{\i}-Mann 
iterations
governed by firmly quasinonexpansive and averaged operators
in $(\rvv,\scal{\cdot}{\vv\cdot})$, which generalizes 
\cite[Theorem~5.2(i)]{ipa} and \cite[Proposition 5.16]{1}.
This result is interesting in its own right.
It is worth to notice that
most of known algorithms can be seen as fixed point iterations of
operators belonging to the previous classes 
\cite{BOT2,bot2013,skew,condat,yuan}. 
 Our approach gives new insights on primal-dual algorithms:
the convergence of primal-dual iterates in $\hh$ follows from the 
convergence of
their \textit{shadows} in $\ran\,\vv$. 

We also provide a detailed analysis of the case $L=\id$ and 
relations of  primal-dual algorithms with the relaxed 
Douglas-Rachford splitting (DRS) 
\cite{eckstein-bertsekas,lions-mercier79}. 
We give a primal-dual 
version of DRS derived from
\eqref{primaldualalgorithm} when $L=\id$
 and we recover the weak convergence of an auxiliary sequence 
whose \textit{primal-dual shadow} is a solution to Problem~\ref{prob2},
as in \cite{eckstein-bertsekas,lions-mercier79}.

We finish this paper by providing a numerical experiment on 
total variation image 
reconstruction, in which the advantages of using critical 
preconditioners and relaxation steps are illustrated.

% The outline is not required, but we show an example here.
The paper is organized as follows. In Section~\ref{sec:notation} we set our notation 
and some preliminaries. In Section~\ref{sec:fixedpoints} we study the 
fixed point problem on the range of linear operators
and we provide conditions for the convergence of fixed point iterations 
governed by firmly quasinonexpansive 
or  averaged nonexpansive operators. In Section~\ref{sec:primaldual} 
we apply fixed point results to the particular case of primal-dual 
monotone 
inclusions and we provide several 
connections with other results in the literature. In Section~\ref{sec:DR}, we study
in detail the particular case  when $L=\id$, which
is connected with Douglas--Rachford splitting. Finally,
in Section~\ref{sec:TV} we provide numerical experiments in 
image processing.

\section{Notation and Preliminaries}
In this section we first provide our notation and some preliminaries. 
Next we study a specific fixed point problem involving the range of a 
self-adjoint monotone linear operator and we provide the convergence 
of a Krasnosel'ski\u{\i}--Mann iteration in this vector subspace, which 
is 
interesting in its own right.  
The weak convergence of \eqref{primaldualalgorithm}
with critical preconditioners is derived from previous fixed point 
analysis.

\subsection{Notation and preliminaries}
\label{sec:notation}
Throughout this paper $\H$ and $\G$ are real Hilbert spaces. We denote the scalar 
product by $\scal{\cdot}{\cdot}$ and the associated norm by 
$\|\cdot \|.$ 
The symbols $\weakly$ and $\to$ denotes the weak and strong 
convergence, respectively.
Given a linear bounded 
operator $L:\H \to \G$, we denote its adjoint by $L^*\colon\G\to\H$, its 
kernel by $\ker L$,
and its range by $\ran L$. $\id$ denotes the identity operator on $\H$. 
Let $D\subset \H$ be non-empty and let $T: D \rightarrow \H$. 
The set of fixed points of $T$ is given by 
$\fix T = \menge{x \in D}{x=Tx}$.
Let $\beta \in \left]0,+\infty\right[$. The operator $T$ is $\beta-$cocoercive if 
\begin{equation} \label{def:coco}
(\forall x \in D) (\forall y \in D)\quad \langle x-y \mid Tx-Ty \rangle \geq \beta \|Tx - Ty 
\|^2,
\end{equation}
it is $\beta-$strongly monotone if 
\begin{equation}\label{def:strongmono}
(\forall x \in D) (\forall y \in D)\ \ \langle x-y \mid Tx-Ty \rangle \geq \beta \|x - y 
\|^2,\end{equation}
it is nonexpansive if 
\begin{equation}\label{def:nonexpansive}
(\forall x \in D) (\forall y \in D)\ \ \|Tx - Ty \|\leq \|x-y\|,
\end{equation}
it is quasinonexpansive if 
\begin{equation}\label{def:quasiexpansive}
(\forall x \in D) (\forall y \in \fix T)\ \ \|Tx - y \| \leq 
\|x-y\|,
\end{equation}
and it is firmly quasinonexpansive (or class $\mathfrak{T}$) if 
\begin{equation}\label{def:quasifirmnonexpansive}
(\forall x \in D) (\forall y \in \fix T)\ \ \|Tx - y \|^2 \leq 
\|x-y\|^2-\|Tx-x\|^2.
\end{equation}
Let $\alpha \in\, ]0,1[$. The operator $T$ is $\alpha-$averaged 
nonexpansive if 
$T=(1-\alpha)\id + \alpha R$ for some nonexpansive operator 
$R: \H \rightarrow \H$,
and $T$ is firmly nonexpansive if it is $\frac{1}{2}-$averaged 
nonexpansive.

Given a 
self-adjoint monotone linear bounded
operator $V\colon\H\to\H$, 
we denote $\scal{\cdot}{\cdot}_V=\scal{\cdot}{V\cdot}$, which is 
bilinear,  positive 
semi-definite, and 
symmetric. Moreover, there exists a self-adjoint monotone linear 
bounded operator $\sqrt{V}\colon\H\to\H$
such that 
\begin{equation}
\label{e:sqrtV}
V=\sqrt{V}\sqrt{V},\quad (\forall x\in\H)\quad 
\scal{x}{Vx}=\|\sqrt{V}x\|^2,
\end{equation}
and $\ran V=\ran\sqrt{V}$.
In addition, if $V$ is strongly monotone, 
$\scal{\cdot}{\cdot}_V$ defines an inner product on $\H$ and we 
denote by $\|\cdot\|_V=\sqrt{\scal{\cdot}{\cdot}_V}$ the induced norm.

Let $A:\H \rightarrow 2^{\H}$ be a set-valued operator. The domain, 
range, and graph of $A$ are 
$\dom\, A = \menge{x \in \H}{Ax \neq  \varnothing}$, 
$\ran\, A = \menge{u \in \H}{(\exists x \in \H)\, u \in Ax}$, 
and $\gra 
A = \menge{(x,u) \in \H \times \H}{u \in Ax}$,
respectively. 
The set of zeros of $A$ is  $\zer A = 
\menge{x \in \H}{0 \in Ax}$, the inverse of $A$ is $A^{-1}\colon \H  \to 2^\H \colon u 
\mapsto 
\menge{x \in \H}{u \in Ax}$, and the resolvent of $A$ is 
$J_A=(\id+A)^{-1}$. We have $\zer A=\fix J_A$.
 The operator $A$  is monotone if 
\begin{equation}\label{def:monotone}
(\forall (x,u) \in \gra A) (\forall (y,v) \in \gra A)\quad \scal{x-y}{u-v} \geq 0
\end{equation}
and it is maximally monotone if it is monotone and there exists no 
monotone operator $B :\H\to  2^{\H}$ such that $\gra B$ properly 
contains $\gra A$, i.e., for every $(x,u) \in \H \times \H$,
\begin{equation} \label{def:maxmonotone}
(x,u) \in \gra A \quad \Leftrightarrow\quad  (\forall (y,v) \in \gra A)\ \ 
\langle x-y \mid u-v \rangle \geq 0.
\end{equation}
 Let $C$ be a non-empty subset of $\H$ and let $\big( x_n \big)_{n \in 
 \N}$ be a sequence 
 in $\H.$ Then  $\big( x_n \big)_{n \in \N}$ is Fej\'er monotone with 
 respect to $C$ if 
\begin{equation}
(\forall x \in C)(\forall n \in \mathbb{N}) \quad\left\|x_{n+1}-x\right\| \leqslant\left\|x_{n}-x\right\|.
\end{equation}
Let $D$ be a non-empty weakly sequentially closed subset of
$\mathcal{H},$ let $T : D \rightarrow \mathcal{H},$ and let $u \in \mathcal{H} .$ 
Then $T$ is demiclosed at $u$ in $(\H , \scal{\cdot}{\cdot})$ if, for every
sequence $\left(x_{n}\right)_{n \in \mathbb{N}}$ in $D$ and every $x \in D$ 
such that $x_{n} \weakly x$ and $T x_{n} \to u$ in $(\H , \scal{\cdot}{\cdot}),$ we 
have have $T x=u .$ In addition, $T$ is demiclosed if it is demiclosed 
at every point in $D.$ 

We denote by $\Gamma_0(\H)$ the class of proper lower 
semicontinuous convex functions $f\colon\H\to\RX$. Let 
$f\in\Gamma_0(\H)$.
The Fenchel conjugate of $f$ is 
defined by $f^*\colon u\mapsto \sup_{x\in\H}(\scal{x}{u}-f(x))$, which 
is a function in $\Gamma_0(\H)$,
the subdifferential of $f$ is the maximally monotone operator
$$\partial f\colon x\mapsto \menge{u\in\H}{(\forall y\in\H)\:\: 
f(x)+\scal{y-x}{u}\le f(y)},$$
$(\partial f)^{-1}=\partial f^*$,
and we have  that $\zer\partial f$ is the set of 
minimizers of $f$, which is denoted by $\arg\min_{x\in \H}f$. 
Given a strongly monotone self-adjoint linear operator 
$\varUpsilon\colon\H\to\H$, we denote by 
\begin{equation}
	\label{e:prox}
\prox^{\varUpsilon}_{f}\colon 
x\mapsto\argm{y\in\H}\big(f(y)+\frac{1}{2}\|x-y\|_{\varUpsilon}^2\big),
\end{equation}
and by $\prox_{f}=\prox^{\id}_{f}$. We have 
$\prox^{\varUpsilon}_f=J_{\varUpsilon^{-1}\partial f}$ 
\cite[Proposition~24.24(i)]{1}
and it is single valued since 
the objective function in 
\eqref{e:prox} is strongly convex. Moreover, it follows from 
\cite[Proposition~24.24]{1} that
\begin{equation}
\label{e:Moreau_nonsme}
\prox^{\varUpsilon}_{f}=
\id-\varUpsilon^{-1}\,  
\prox^{\varUpsilon^{-1}}_{f^*}\, 
\varUpsilon=\varUpsilon^{-1}\,(\id-  
\prox^{\varUpsilon^{-1}}_{f^*})\, 
\varUpsilon.
\end{equation}
Given a non-empty closed convex set $C\subset\H$, we denote by 
$P_C$ the projection onto $C$ and by
$\iota_C\in\Gamma_0(\H)$ the indicator function of $C$, which 
takes the value $0$ in $C$ and $\pinf$ otherwise. 
For further properties of monotone operators,
nonexpansive mappings, and convex analysis, the 
reader is referred to \cite{1}.

The following result allows us to define algorithms in a real Hilbert 
space defined 
by the range of non-invertible self-adjoint linear bounded operators.
The result is a direct consequence of 
\cite[Fact~2.26]{1} and \eqref{e:sqrtV}.
\begin{prop}
\label{prophilbert}
Let $V: \H \rightarrow \H$ be a monotone self-adjoint linear bounded 
operator. The following statements are equivalent.
\begin{enumerate}
\item \label{prop:closedrange11}
$\rv$ is closed.
\item \label{prop:closedrange21}$(\exists \alpha > 0)(\forall x \in \ran\, 
V )$
\begin{equation}
\label{propiedadV1}
\quad\langle Vx\mid x \rangle \geq \alpha \|x\|^2.
\end{equation}
\end{enumerate}
 Moreover, if \ref{prop:closedrange11} or \ref{prop:closedrange21} 
 holds, then $ (\ran\, V,\scal{\cdot}{\cdot}_{V})$ is a real Hilbert space.
\end{prop}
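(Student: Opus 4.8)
The plan is to reduce everything to the operator $\sqrt{V}$ together with the standard characterization of a closed range by a bounded-below inequality. First I would record the consequences of \eqref{e:sqrtV}: for every $x\in\H$ one has $\scal{Vx}{x}=\|\sqrt{V}x\|^2$ and $\ran V=\ran\sqrt{V}$. Since $\sqrt{V}x=0$ forces $Vx=\sqrt{V}\sqrt{V}x=0$, while conversely $Vx=0$ gives $\|\sqrt{V}x\|^2=\scal{Vx}{x}=0$, we obtain $\ker V=\ker\sqrt{V}$. Because $\sqrt{V}$ is self-adjoint this yields $(\ker\sqrt{V})^\perp=\overline{\ran\sqrt{V}}=\overline{\ran V}$, which is the natural space on which the closed-range criterion operates.

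For the equivalence I would invoke the bounded-below characterization of closed range (the content of \cite[Fact~2.26]{1}) applied to $\sqrt{V}$: the set $\ran\sqrt{V}$ is closed if and only if there exists $\beta>0$ with $\|\sqrt{V}x\|\ge\beta\|x\|$ for every $x\in(\ker\sqrt{V})^\perp$. To prove \ref{prop:closedrange11}$\Rightarrow$\ref{prop:closedrange21}, if $\rv=\ran\sqrt{V}$ is closed this inequality holds in particular on $\ran V\subseteq(\ker\sqrt{V})^\perp$, and squaring gives $\scal{Vx}{x}=\|\sqrt{V}x\|^2\ge\beta^2\|x\|^2$ for all $x\in\ran V$, i.e.\ \eqref{propiedadV1} with $\alpha=\beta^2$. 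For the converse \ref{prop:closedrange21}$\Rightarrow$\ref{prop:closedrange11}, condition \eqref{propiedadV1} reads $\|\sqrt{V}x\|^2\ge\alpha\|x\|^2$ for $x\in\ran V=\ran\sqrt{V}$; here lies the one delicate point, since Fact~2.26 needs the estimate on all of $(\ker\sqrt{V})^\perp=\overline{\ran\sqrt{V}}$ rather than merely on the dense subset $\ran\sqrt{V}$. I would close this gap by continuity: given $x\in\overline{\ran\sqrt{V}}$ choose $x_n\in\ran\sqrt{V}$ with $x_n\to x$, pass to the limit in $\|\sqrt{V}x_n\|\ge\sqrt{\alpha}\,\|x_n\|$ using boundedness of $\sqrt{V}$, and conclude $\|\sqrt{V}x\|\ge\sqrt{\alpha}\,\|x\|$ on $(\ker\sqrt{V})^\perp$; Fact~2.26 then yields that $\ran\sqrt{V}=\rv$ is closed. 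I expect this density-and-continuity extension to be the main, and essentially the only, obstacle, the rest being bookkeeping.

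Finally, for the Hilbert-space assertion I would assume \ref{prop:closedrange21}. Symmetry and bilinearity of $\scal{\cdot}{\cdot}_{V}$ are already noted, so only positive definiteness and completeness remain. Positive definiteness on $\ran V$ is immediate from \eqref{propiedadV1}, since $\scal{x}{x}_{V}=\scal{Vx}{x}\ge\alpha\|x\|^2$ vanishes only when $x=0$. For completeness I would show that the norms $\|\cdot\|_{V}$ and $\|\cdot\|$ are equivalent on $\ran V$: the lower bound $\|x\|_{V}\ge\sqrt{\alpha}\,\|x\|$ is exactly \eqref{propiedadV1}, while Cauchy--Schwarz gives $\|x\|_{V}^2=\scal{Vx}{x}\le\|V\|\,\|x\|^2$. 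Since \ref{prop:closedrange11} makes $\ran V$ a closed, hence complete, subspace of $(\H,\|\cdot\|)$, this norm equivalence transfers completeness to $(\ran V,\|\cdot\|_{V})$, so that $(\ran V,\scal{\cdot}{\cdot}_{V})$ is a real Hilbert space.
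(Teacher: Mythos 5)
Your argument is correct and follows exactly the route the paper intends: the paper's ``proof'' is just the one-line citation of \cite[Fact~2.26]{1} together with \eqref{e:sqrtV}, and your write-up is a faithful expansion of that, with the density-and-continuity step (extending \eqref{propiedadV1} from $\ran V$ to $(\ker\sqrt{V})^\perp=\overline{\ran V}$ before invoking the bounded-below characterization) being precisely the detail the paper leaves implicit. The only cosmetic remark is that the identity $\ran V=\ran\sqrt{V}$, which the paper asserts and you quote, is not needed for your argument and only holds a posteriori once the range is closed; density of $\ran V$ in $\overline{\ran V}=(\ker\sqrt{V})^\perp$ already suffices.
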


The following example exhibits a monotone self-adjoint linear 
bounded operator whose range is not closed,
illustrating that assumption $\ran\, V$ closed is 
not redundant in our setting.
\begin{ejem}
Let $\ell^2(\R)$ be the real Hilbert space defined by square summable 
sequences in $\RR$ endowed by the inner product
$\scal{\cdot}{\cdot}\colon
(x,y)\mapsto \sum_{j\ge1} x_j\, y_j$
and consider the 
monotone self-adjoint bounded linear
operator 
\begin{displaymath}
V: \ell^2(\R) \to \ell^2(\R)\colon (x_n)_{n\in\N\setminus\{0\}} \mapsto 
\left( x_1, \frac{x_2}{2}, 
\frac{x_3}{3}, \ldots\right).
\end{displaymath}
By considering the sequence $(x^n)_{n \in \N} \subset \ell^2(\R)$ 
defined by $x_j^n=1$ for $j\le n$ and $x_j^n=0$ for $j> n$...
we have $Vx^n\to y=(1/j)_{j \in 
\N\setminus\{0\}}\in\ell^2(\R)$  as $n\to+\infty$, and
$y\not\in \rv$, which implies that $\rv$  is not 
closed.
\end{ejem}

	\subsection{Fixed points in the range of linear operators}
	\label{sec:fixedpoints}	
The following fixed point problem 
is the basis for the analysis of primal-dual algorithms. 
\begin{pro}\label{prob1}
Let $(\hh,\scal{\cdot}{\cdot})$ be a real Hilbert space, let $\vv : \hh 
\rightarrow \hh$  be a 
monotone
self-adjoint linear bounded operator such that $\rvv$ is 
closed, and let 
$\Ss\colon\hh\to\hh $ be such that 
$\fix \Ss\neq \varnothing$, that 
$\Ss=\Ss \circ P_{\rvv}$, and that 
$(P_{\rvv}\circ\Ss)|_{\rvv}$ is quasinonexpansive in $(\rvv, 
\scal{\cdot}{\cdot}_{\vv})$. 
The problem is to
\begin{equation} \label{problem1}
\textrm{find}\quad   \x \in \fix 
\Ss.
\end{equation}
\end{pro}
First observe that 
under the hypotheses on $\vv$,  
Proposition~\ref{prophilbert} asserts that $(\rvv, 
\scal{\cdot}{\cdot}_{\vv})$ is a real Hilbert space.
In the particular case when $\vv=\Id$, 
we have $\ran\, \vv = \hh$, $P_{\ran\vv}=\Id$, and
Problem~\ref{prob1} is solved in \cite{ipa} when 
$\Ss$ is 
firmly quasinonexpansive (or class $\mathfrak{T}$), and $\Id-\Ss$ is 
demiclosed at 
$\bm{0}$, and in \cite{1,opti04} when 
$\Ss$ is averaged nonexpansive.
In the case when $\vv$ is self-adjoint and strongly monotone, 
 we also have
$\ran\, \vv = \hh$, $P_{\ran\vv}=\Id$, and several approaches with 
non-standard metrics
are developed for solving Problem~\ref{prob1} by using contractive
assumptions on $\Ss$
(see, e.g., 
\cite{Siopt2,combvu,condat,Davisconvrate,yuan,CPockDiag,vu}).
In all cases, the problem is solved via the Krasnosel'ski\u{\i}--Mann 
iteration
\begin{equation}\label{fpi}
\bm{x}_0 \in \hh, \quad (\forall n \in \N) \quad 
\bm{x}_{n+1}=(1-\lambda_n)\bm{x}_{n}+\lambda_n \Ss \bm{x}_{n},
\end{equation}
where $(\lambda_n)_{n\in\N}$ is a strictly positive sequence. 
The main difference of Problem~\ref{prob1} with respect to previous 
literature is that the contractive property is only guaranteed for  
the \textit{shadow operator} $(P_{\rvv}\circ\Ss)|_{\rvv}$ on
$(\rvv, \scal{\cdot}{\cdot}_{\vv})$ without any further assumption on 
$\Ss$. 
In this section we obtain conditions for ensuring the convergence of 
the \textit{shadow sequence} $(P_{\rvv}\x_n)_{n\in\N}$  to a solution 
to Problem~\ref{prob1} from \eqref{fpi}. 
 We first need the following technical lemma.
\begin{lem} \label{equalsetsGEN}
	Let  $\bm{Q}\colon\hh\to\hh$ and let $\Ss\colon\hh\to\hh $ be 
	such that 
	$\fix \Ss\neq \varnothing$ and $\Ss=\Ss \circ \bm{Q}$.
	Then 
	$\Ss ( \fix (\bm{Q}\circ\Ss) )= \fix \Ss$ and, in particular, 
	$\fix (\bm{Q}\circ\Ss) \neq \varnothing$.
\end{lem}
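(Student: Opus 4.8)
The plan is to prove the set equality $\Ss(\fix(\bm{Q}\circ\Ss))=\fix\Ss$ by establishing the two inclusions separately, and then to read off the nonemptiness of $\fix(\bm{Q}\circ\Ss)$ as an immediate corollary. Throughout I will use the defining hypothesis $\Ss=\Ss\circ\bm{Q}$ in the pointwise form $\Ss y=\Ss(\bm{Q}y)$ for every $y\in\hh$; no continuity, linearity, or monotonicity of $\bm{Q}$ or $\Ss$ is needed, so the argument is purely set-theoretic.

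For the inclusion $\Ss(\fix(\bm{Q}\circ\Ss))\subseteq\fix\Ss$, I would take $z\in\fix(\bm{Q}\circ\Ss)$, so that $\bm{Q}\Ss z=z$, and show that $\Ss z$ is a fixed point of $\Ss$. Applying the hypothesis with the argument $\Ss z$ gives $\Ss(\Ss z)=\Ss(\bm{Q}\Ss z)$, and since $\bm{Q}\Ss z=z$ the right-hand side equals $\Ss z$. Hence $\Ss(\Ss z)=\Ss z$, i.e. $\Ss z\in\fix\Ss$, which is exactly what is required.

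The reverse inclusion $\fix\Ss\subseteq\Ss(\fix(\bm{Q}\circ\Ss))$ is where the only real subtlety lies, and I expect it to be the main (if modest) obstacle. Given $y\in\fix\Ss$ one is tempted to use $y$ itself as the preimage, but $y$ need not belong to $\fix(\bm{Q}\circ\Ss)$ since we have no control over $\bm{Q}y$. The correct witness is $z:=\bm{Q}y$. Indeed, $\Ss z=\Ss(\bm{Q}y)=\Ss y=y$ by the hypothesis together with $y\in\fix\Ss$, so $z$ maps to $y$ under $\Ss$; moreover $\bm{Q}\Ss z=\bm{Q}y=z$, so $z\in\fix(\bm{Q}\circ\Ss)$. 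This exhibits $y=\Ss z\in\Ss(\fix(\bm{Q}\circ\Ss))$ and closes the inclusion.

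Finally, since $\fix\Ss\neq\varnothing$ by assumption and the image of the empty set under any map is empty, the established equality $\Ss(\fix(\bm{Q}\circ\Ss))=\fix\Ss\neq\varnothing$ forces $\fix(\bm{Q}\circ\Ss)\neq\varnothing$, which gives the \emph{in particular} clause. In summary, the entire proof reduces to two one-line substitutions into $\Ss=\Ss\circ\bm{Q}$, the only point requiring care being the choice of $\bm{Q}y$ rather than $y$ as the preimage in the reverse inclusion.
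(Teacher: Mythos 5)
Your proof is correct and follows essentially the same route as the paper's: both directions use the identical substitutions into $\Ss=\Ss\circ\bm{Q}$, including the key choice of $\bm{Q}y$ as the witness for the inclusion $\fix\Ss\subseteq\Ss(\fix(\bm{Q}\circ\Ss))$. Nothing is missing.
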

\begin{proof}
	First, let $ \bm{x} \in \fix \Ss.$ Since $\Ss=\Ss \circ \bm{Q}$ 
	we have
	\begin{align*}
		\Ss \bm{x} = \bm{x}\quad  
		&\Leftrightarrow\quad   \Ss(\bm{Q} \bm{x}) = \bm{x}\\
		&\Rightarrow\quad \bm{Q}\circ \Ss (\bm{Q}\bm{x}) = 
		\bm{Q} \bm{x}
	\end{align*}
	hence $\bm{Q}\bm{x} \in \fix (\bm{Q}\circ \Ss)$ and 
	$\Ss(\bm{Q}\bm{x})=\bm{x}$. Thus, 
	$\bm{x} \in \Ss(\fix (\bm{Q}\circ \Ss))$ and we conclude 
	$\fix\Ss \subset \Ss ( \fix 
	(\bm{Q}\circ 
	\Ss))$. Conversely, let 
	$\bm{x} \in \fix (\bm{Q}\circ \Ss)$. Since $\Ss=\Ss \circ 
	\bm{Q}$, we have
	\begin{align*}
		\bm{Q} (\Ss\bm{x})=\bm{x}\quad 
		&\Rightarrow\quad    \Ss(\bm{Q} (\Ss\bm{x}))=\Ss\bm{x}\\
		&\Rightarrow\quad    \Ss(\Ss\bm{x})=\Ss\bm{x}\\
		&\Rightarrow\quad    \Ss\bm{x} \in \fix\Ss.
	\end{align*}
	Thus $ \Ss ( \fix (\bm{Q}\circ \Ss)) \subset \fix\Ss$ and the 
	result follows.\qed
\end{proof}
Now we prove that
Krasnosel'ski\u{\i}--Mann iterations defined by $\Ss$ approximate the 
solutions to 
Problem~\ref{prob1} via their \textit{shadows} in ${\ran\vv}$. 
We derive our result
for firmly quasinonexpansive (or class 
$\mathfrak{T}$) operators $\TT$ such that $\Id-\TT$ is demiclosed at 
$\bm{0}$, and for  
$\alpha-$averaged nonexpansive operators, for some 
$\alpha\in\left]0,1\right[$. 
\begin{prop} \label{teogenquasi}
In the context of Problem~\ref{prob1}, define
\begin{equation}\label{defT}
	\TT: \rvv \rightarrow \rvv : x \mapsto P_\rvv \circ \Ss x
\end{equation}
and consider the sequence $(\bm{x}_n)_{n \in 
	\mathbb{N}}$ 
	defined by the 
	recurrence
	\begin{equation}\label{FPSq}
	\bm{x}_0 \in \hh, \quad (\forall n \in \N)\quad 
	\bm{x}_{n+1}=(1-\lambda_n)\,\bm{x}_n+\lambda_n\Ss \bm{x}_n.
	\end{equation}
Moreover, suppose that one of the following holds:
\begin{enumerate}[label=(\roman*)]
\item \label{teogenquasiquasihyp1}
$\TT$ is firmly quasinonexpansive, $\Id-\TT$ is demiclosed at 
$\bm{0}$ in $(\rvv, \scal{\cdot}{\cdot}_{\vv})$, and	
 $(\lambda_n)_{n\in\N}$ is a sequence in 
	$\left[\varepsilon,2-\varepsilon\right]$ for some 
	$\varepsilon\in\left]0,1\right[$.
\item \label{teogenquasiquasihyp2} $\TT$ is $\alpha-$averaged
nonexpansive in $(\rvv, \scal{\cdot}{\cdot}_{\vv})$ for some 
$\alpha\in\left]0,1\right[$ and $(\lambda_n)_{n\in\N}$ 
is a sequence in 
$\left[0,1/\alpha\right]$ such that 
$\sum_{n\in\N}\lambda_n(1-\alpha\lambda_n)=\pinf$.
\end{enumerate}
	Then the following hold:
	\begin{enumerate}
		\item\label{teogenquasiquasi}$(P_{\rvv}\x_n)_{n\in\N}$ is F\'ejer 
		monotone 
		in $(\rvv, \langle \cdot \mid 
		\cdot 
		\rangle_{\vv})$ 
		with respect to $\fix\TT$.
		\item\label{teogenquasistrong} $(P_{\rvv}(\Ss\x_n-\x_n))_{n\in\N}$ converges strongly to 
		$\bm{0}$ in $(\rvv, \scal{\cdot}{\cdot}_{\vv})$.
		\item\label{teogenquasiconv} $(P_{\rvv}\x_n)_{n\in\N}$ converges weakly in $(\rvv, 
		\scal{\cdot}{\cdot}_{\vv})$ to 
		some $\hat{\bm{x}} \in \fix\TT$ and $\Ss\hat{\bm{x}}$ is a solution to 
		Problem~\ref{prob1}.
	\end{enumerate}
\end{prop}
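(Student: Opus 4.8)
The plan is to reduce the whole statement to the classical Krasnosel'ski\u{\i}--Mann theory inside the genuine real Hilbert space $(\rvv,\scal{\cdot}{\cdot}_{\vv})$ provided by Proposition~\ref{prophilbert}, by showing that the \emph{shadow sequence} $\bm{y}_n:=P_{\rvv}\x_n$ is itself a relaxed KM iteration governed by $\TT$. Since $\rvv$ is a closed linear subspace, $P_{\rvv}$ is linear and bounded, so applying it to \eqref{FPSq} gives
\begin{equation*}
\bm{y}_{n+1}=(1-\lambda_n)\bm{y}_n+\lambda_n P_{\rvv}\Ss\x_n .
\end{equation*}
The hypothesis $\Ss=\Ss\circ P_{\rvv}$ yields $\Ss\x_n=\Ss(P_{\rvv}\x_n)=\Ss\bm{y}_n$, and because $\bm{y}_n\in\rvv$ we obtain $P_{\rvv}\Ss\x_n=(P_{\rvv}\circ\Ss)\bm{y}_n=\TT\bm{y}_n$. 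Hence $\bm{y}_{n+1}=(1-\lambda_n)\bm{y}_n+\lambda_n\TT\bm{y}_n$ with $\bm{y}_0=P_{\rvv}\x_0\in\rvv$, a bona fide KM iteration of $\TT$ in $(\rvv,\scal{\cdot}{\cdot}_{\vv})$. The same identity shows $P_{\rvv}(\Ss\x_n-\x_n)=\TT\bm{y}_n-\bm{y}_n$, which is precisely the quantity appearing in statement~\ref{teogenquasistrong}.

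Next I would establish $\fix\TT\neq\varnothing$. Applying Lemma~\ref{equalsetsGEN} with $\bm{Q}=P_{\rvv}$ (admissible since $\Ss=\Ss\circ P_{\rvv}$ and $\fix\Ss\neq\varnothing$) gives $\fix(P_{\rvv}\circ\Ss)\neq\varnothing$ and $\Ss(\fix(P_{\rvv}\circ\Ss))=\fix\Ss$. Any $\bm{z}\in\fix(P_{\rvv}\circ\Ss)$ satisfies $\bm{z}=P_{\rvv}\Ss\bm{z}\in\ran P_{\rvv}=\rvv$, whence $\bm{z}\in\rvv$ and $\TT\bm{z}=\bm{z}$; conversely $\fix\TT\subset\rvv$ by construction. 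Thus $\fix\TT=\fix(P_{\rvv}\circ\Ss)\neq\varnothing$.

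With these two facts the three conclusions become transcriptions of known KM results in the Hilbert space $(\rvv,\scal{\cdot}{\cdot}_{\vv})$. In case~\ref{teogenquasiquasihyp1}, expanding $\|\bm{y}_{n+1}-\bm{z}\|_{\vv}^2$ for $\bm{z}\in\fix\TT$ and using the firm quasinonexpansiveness \eqref{def:quasifirmnonexpansive} of $\TT$ produces $\|\bm{y}_{n+1}-\bm{z}\|_{\vv}^2\le\|\bm{y}_n-\bm{z}\|_{\vv}^2-\lambda_n(2-\lambda_n)\|\TT\bm{y}_n-\bm{y}_n\|_{\vv}^2$, which gives the F\'ejer monotonicity~\ref{teogenquasiquasi} and, after telescoping with $\lambda_n\in[\varepsilon,2-\varepsilon]$, the strong convergence $\TT\bm{y}_n-\bm{y}_n\to\bm 0$ of~\ref{teogenquasistrong}; demiclosedness of $\Id-\TT$ at $\bm 0$ then forces every weak cluster point of the bounded F\'ejer-monotone sequence $(\bm{y}_n)$ into $\fix\TT$, and the standard F\'ejer/Opial argument upgrades this to weak convergence of $(\bm{y}_n)$ to some $\hat{\bm{x}}\in\fix\TT$; this is exactly \cite[Theorem~5.2(i)]{ipa} transported to $(\rvv,\scal{\cdot}{\cdot}_{\vv})$. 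In case~\ref{teogenquasiquasihyp2} the same three conclusions follow from \cite[Proposition~5.16]{1} applied to the $\alpha$-averaged operator $\TT$ under the summability condition on $(\lambda_n)$. Finally, since $\hat{\bm{x}}\in\fix\TT=\fix(P_{\rvv}\circ\Ss)$, Lemma~\ref{equalsetsGEN} gives $\Ss\hat{\bm{x}}\in\Ss(\fix(P_{\rvv}\circ\Ss))=\fix\Ss$, so $\Ss\hat{\bm{x}}$ solves Problem~\ref{prob1}, completing~\ref{teogenquasiconv}.

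The step I expect to be the main obstacle is the first one: making rigorous that the shadow sequence is a \emph{genuine} KM iteration of $\TT$ with respect to the non-standard inner product $\scal{\cdot}{\cdot}_{\vv}$, since $P_{\rvv}$ is the orthogonal projection in $(\hh,\scal{\cdot}{\cdot})$ whereas the contractivity is measured in $\scal{\cdot}{\cdot}_{\vv}$. The key point is that the hypothesis $\Ss=\Ss\circ P_{\rvv}$ lets one replace $\Ss\x_n$ by $\Ss\bm{y}_n$ \emph{before} projecting, so the recursion closes on the subspace $\rvv$, and the contractivity assumption placed only on the shadow operator $(P_{\rvv}\circ\Ss)|_{\rvv}$ is exactly what the classical theorems require; once this reduction is in place, the remaining work is routine in $(\rvv,\scal{\cdot}{\cdot}_{\vv})$.
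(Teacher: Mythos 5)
Your proposal is correct and follows essentially the same route as the paper: project the recursion \eqref{FPSq} onto $\rvv$ using the linearity of $P_{\rvv}$ and the identity $\Ss=\Ss\circ P_{\rvv}$ to obtain a genuine KM iteration of $\TT$ in the Hilbert space $(\rvv,\scal{\cdot}{\cdot}_{\vv})$, invoke Lemma~\ref{equalsetsGEN} for $\fix\TT\neq\varnothing$ and the identification $\Ss(\fix\TT)=\fix\Ss$, and then cite \cite[Theorem~5.2(i)]{ipa} and \cite[Proposition~5.16]{1} for the two cases. The paper's proof is exactly this, so no further comparison is needed.
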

\begin{proof}
Since $\vv$ is a monotone
bounded self-adjoint linear operator and $\rvv$ is closed, it follows from 
Proposition~\ref{prophilbert} that
$(\rvv, \langle \cdot \mid \cdot 
\rangle_{\vv})$ is a real Hilbert space.
Moreover, since $\Ss=\Ss\circ P_{\rvv}$ and 
$\fix \Ss\ne \varnothing$, Lemma~\ref{equalsetsGEN}
yields 
\begin{equation}
	\label{e:relfix}
\Ss ( \fix \TT )= \fix \Ss
\end{equation}
and, hence, $\fix \TT \neq 
\varnothing$. Therefore, since
$P_{\rvv}$ is linear, by defining, for every 
$n\in\N$,
 $\bm{y}_n=P_{\rvv}\x_{n}$, it follows from \eqref{FPSq} and 
$\TT=\TT\circ P_{\rvv}$
that
 \begin{equation}
 \bm{y}_0 \in \rvv, \quad (\forall n \in \N)\quad 
 \bm{y}_{n+1}=(1-\lambda_n)\,\bm{y}_n+\lambda_n\TT \bm{y}_n.
 \end{equation}
If we assume \ref{teogenquasiquasihyp1}, since 
$\inf_{n\in\N}\lambda_n(2-\lambda_n)\ge\varepsilon^2$, 
\ref{teogenquasiquasi} and \ref{teogenquasistrong} follow from 
\cite[Proposition~4.2]{ipa} in the error free  case. Finally, it follows from
\cite[Theorem~5.2(i)]{ipa}
that $\bm{y}_n$ converges weakly to some $\hat{\bm{y}}\in\fix\TT$, and
\ref{teogenquasiconv} is obtained from \eqref{e:relfix}.

On the other hand, if we assume \ref{teogenquasiquasihyp2}, the results follow from 
\cite[Proposition~5.16]{1} and \eqref{e:relfix}.  
\end{proof}

\begin{rem}
\begin{enumerate}
\item Previous results does not include summable errors for ease of 
the presentation, 
but they can be included effortlessly.

\item In the case when $\vv$ is strongly monotone, we have $\rvv 
= \hh$, $P_{\rvv}=\Id$, and Proposition~\ref{teogenquasi}(i) and
Proposition~\ref{teogenquasi}(ii) 
are equivalent to
\cite[Theorem~5.2(i)]{ipa} and 
\cite[Proposition 5.16]{1}, respectively.

\item In 
\cite{combvu,Davisconvrate}, 
a version of \cite[Proposition~5.16]{1} allowing for 
operators $(\Ss_k)_{k\in\N}$ and $(\vv_k)_{k\in\N}$ varying among 
iterations is proposed.
This modification allows to include variable 
step-sizes in primal-dual algorithms. In our context, the difficulty of 
including such generalization lies on the variation of the 
real Hilbert spaces $(\ran\vv_k,\scal{\cdot}{\cdot}_{\vv_k})_{k\in\N}$, 
which complicates the asymptotic analysis.
\end{enumerate}
\end{rem}

%Furthermore the norms $\|\cdot \|$ and $\|\cdot\|_{\vv}$ are equivalent hence the spaces $(\hh, \langle \cdot , \cdot \rangle )$ and $(\rvv, \langle \cdot , \cdot \rangle_{\vv} )$ are equal (under isomorphism).
%%%%%%%%%%%%%%%%%%%%%%%%%%%%%%%%%%%%%%%%%%%%%%%%%%%%%%
%%%%%%%%%%%%%%%%%%%%%%%%%%%%%%%%%%%%%%%%%%%%%%%%%%%%%%
%%%%%%%%%%%%%%%%%%%%%%%%%%%%%%%%%%%%%%%%%%%%%%%%%%%%%%
%%%%%%%%%%%%%%%%%%%%%%%%%%%%%%%%%%%%%%%%%%%%%%%%%%%%%%
\section{Application to Primal-Dual algorithms for monotone 
inclusions}
\label{sec:primaldual}
Now we focus on the asymptotic analysis of the relaxed primal-dual 
algorithm in \eqref{primaldualalgorithm}
for solving Problem~\ref{prob2}.
First, note that $\bm{Z}=\zer\MM$, where
\begin{equation} 
\label{def:M}
{\MM}\colon \hh \rightarrow 2^{\hh}:(x,u) \mapsto \{(y,v) \in \hh \mid y 
\in Ax + L^* u,\ v \in 
B^{-1} u -Lx\}
\end{equation}
is maximally monotone in $\hh = \H \oplus \G$
\cite[Proposition 2.7(iii)]{skew}.
We define
\begin{equation}
\label{e:defV}
\vv: \hh \rightarrow \hh :(x,u) \mapsto \left( \Upsilon^{-1} x -L^*u,\Sigma^{-1}u 
-Lx\right),
\end{equation}
where $\Sigma: \G \to \G$ and $\Upsilon : \H \to \H$ are strongly 
monotone self-adjoint linear operators such that $\|\sqrt{\Sigma} 
L\sqrt{\Upsilon}\|\le 1$. 
In the case when, $\|\sqrt{\Sigma} L\sqrt{\Upsilon}\|< 1$, $\vv$ is 
strongly 
monotone \cite[eq. (6.15)]{vuvarmet} and the primal-dual algorithm 
is obtained by applying the proximal point algorithm (PPA) to 
the maximally monotone operator $\vv^{-1}\MM$ in the space
$(\H\times\G,\pscal{\cdot}{\cdot}_{\vv})$ 
\cite{bot2013,combvu,condat,yuan,CPockDiag,vu}.
In the case when $\|\sqrt{\Sigma} L\sqrt{\Upsilon}\|=1$, $\vv$ is no longer strongly 
monotone and $\pscal{\cdot}{\cdot}_{\vv}$ does not define an inner 
product. However, if $\rvv$ is closed, 
$(\ran\vv,\pscal{\cdot}{\cdot}_\vv)$
is a real Hilbert space 
in view of Proposition~\ref{prophilbert}, and we obtain the 
convergence of the primal-dual algorithm when $\|\sqrt{\Sigma} 
L\sqrt{\Upsilon}\|\le 1$ in this Hilbert space using 
Proposition~\ref{teogenquasi}. The following result provides 
conditions on  Problem~\ref{prob2} guaranteeing that $\rvv$ is closed.
\begin{prop} \label{Vpropiedades}
In the context of Problem \ref{prob2}, set
$\hh = \H \oplus \G$, let $\Sigma: \G \to \G$ and $\Upsilon : \H \to \H$ 
be strongly monotone self-adjoint linear bounded operators such that 
$\|\sqrt{\Sigma} L\sqrt{\Upsilon}\|\le1$, and let $\vv$ be the operator 
defined in 
\eqref{e:defV}. Then, the 
following hold:
\begin{enumerate}
\item\label{Vpropiedades:1}
$\vv$ is linear, bounded, self-adjoint, and
	$\frac{\T\s}{\T+\s}$-cocoercive, where  $\s>0$ and $\T>0$ are the 
	strongly monotone constants of $\Sigma$ and $\Upsilon$, 
	respectively.
\item\label{Vpropiedades:2} The
	 followings statements are equivalent.
	 \begin{enumerate}
	 	\item\label{vclosedi} $\ran\, \vv$ is closed in $\hh$.
	 	\item\label{vclosedii}$\ran (\Sigma^{-1} - L \Upsilon L^*)$ is 
	 	closed in $\G$.
	 	\item\label{vclosediii}$\ran(\Upsilon^{-1}-   L^* \Sigma L)$ is 
	 	closed in $\H$.
	 \end{enumerate}
\end{enumerate}	 
\end{prop}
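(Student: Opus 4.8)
\textbf{Part \ref{Vpropiedades:1}.} The plan is to dispatch linearity and boundedness immediately (they follow from those of $\Upsilon^{-1}$, $\Sigma^{-1}$, $L$, and $L^*$), and to obtain self-adjointness by computing $\scal{\vv(x,u)}{(y,v)}$ and transposing $\Upsilon^{-1}$ and $\Sigma^{-1}$. For cocoercivity I would evaluate the quadratic form directly. Introducing $a=\sqrt{\Upsilon^{-1}}\,x$ and $b=\sqrt{\Sigma^{-1}}\,u$, one has $\scal{x}{\Upsilon^{-1}x}=\|a\|^2$, $\scal{u}{\Sigma^{-1}u}=\|b\|^2$ and $\scal{Lx}{u}=\scal{b}{\sqrt{\Sigma}\,L\sqrt{\Upsilon}\,a}$, so that
\[
\scal{(x,u)}{\vv(x,u)}=\|a\|^2-2\scal{b}{\sqrt{\Sigma}L\sqrt{\Upsilon}a}+\|b\|^2 .
\]
The hypothesis $\|\sqrt{\Sigma}L\sqrt{\Upsilon}\|\le1$ bounds this quantity below by $(\|a\|-\|b\|)^2\ge0$, which already yields monotonicity, and above by $(\|a\|+\|b\|)^2$. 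Combining the upper bound with $\|a\|^2\le\T^{-1}\|x\|^2$, $\|b\|^2\le\s^{-1}\|u\|^2$ and the Cauchy--Schwarz inequality $(\T^{-1/2}\|x\|+\s^{-1/2}\|u\|)^2\le(\T^{-1}+\s^{-1})(\|x\|^2+\|u\|^2)$ would give $\scal{(x,u)}{\vv(x,u)}\le\frac{\T+\s}{\T\s}\|(x,u)\|^2$, hence $\|\vv\|\le\frac{\T+\s}{\T\s}$. Finally, any monotone self-adjoint bounded $V$ satisfies $\scal{Vz}{z}\ge\|V\|^{-1}\|Vz\|^2$ (since $\|V\|V-V^2=\sqrt{V}(\|V\|\id-V)\sqrt{V}\succeq0$), so it is $\|V\|^{-1}$-cocoercive; applied to $\vv$ this delivers the claimed $\frac{\T\s}{\T+\s}$-cocoercivity.

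\textbf{Part \ref{Vpropiedades:2}.} The core idea is a block Gauss--Schur factorization of $\vv=\left(\begin{smallmatrix}\Upsilon^{-1}&-L^*\\-L&\Sigma^{-1}\end{smallmatrix}\right)$ that isolates one Schur complement at a time. Since $\Upsilon^{-1}$ is boundedly invertible (strong monotonicity of $\Upsilon$), I would write
\[
\vv=E\begin{pmatrix}\Upsilon^{-1}&0\\0&\Sigma^{-1}-L\Upsilon L^*\end{pmatrix}E^*,\qquad E=\begin{pmatrix}\id&0\\-L\Upsilon&\id\end{pmatrix},
\]
and symmetrically, using that $\Sigma^{-1}$ is boundedly invertible,
\[
\vv=F\begin{pmatrix}\Upsilon^{-1}-L^*\Sigma L&0\\0&\Sigma^{-1}\end{pmatrix}F^*,\qquad F=\begin{pmatrix}\id&-L^*\Sigma\\0&\id\end{pmatrix}.
\]
Both $E$ and $F$ are bounded and boundedly invertible (triangular with identity diagonal), and their adjoints are the upper/lower triangular factors appearing on the right, so each identity is confirmed by a direct block multiplication.

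\textbf{From the factorizations to the equivalences.} Since $E^*$ is surjective and $E$ is a homeomorphism, I would use $\ran\vv=E\big(\ran\operatorname{diag}(\Upsilon^{-1},\Sigma^{-1}-L\Upsilon L^*)\big)$ together with the facts that homeomorphisms preserve closedness and that $\operatorname{diag}(\Upsilon^{-1},S)$ has range $\H\times\ran S$ because $\Upsilon^{-1}$ maps onto $\H$. Hence $\ran\vv$ is closed iff $\H\times\ran(\Sigma^{-1}-L\Upsilon L^*)$ is closed iff $\ran(\Sigma^{-1}-L\Upsilon L^*)$ is closed, giving \ref{vclosedi}$\Leftrightarrow$\ref{vclosedii}; the same argument through $F$ (with $\Sigma^{-1}$ now the surjective pivot) yields \ref{vclosedi}$\Leftrightarrow$\ref{vclosediii}. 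I expect the main obstacle to lie precisely in this transfer of closedness: one must recognize the right triangular factorization and then verify that, in the infinite-dimensional setting, the outer factors are genuine homeomorphisms (guaranteed by strong monotonicity) and that the surjectivity of the invertible diagonal block collapses its summand, leaving the Schur complement as the sole obstruction to closedness. The cocoercivity constant in Part~\ref{Vpropiedades:1} is a careful but routine computation by comparison.
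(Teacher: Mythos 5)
Your proposal is correct, but it follows a genuinely different route from the paper on both parts. For \ref{Vpropiedades:1} the paper simply cites \cite[Proposition~2.1]{SDR}, whereas your self-contained computation checks out: with $a=\sqrt{\Upsilon^{-1}}x$ and $b=\sqrt{\Sigma^{-1}}u$ the quadratic form is exactly $\|a\|^2-2\scal{\sqrt{\Sigma}L\sqrt{\Upsilon}a}{b}+\|b\|^2$, the upper bound yields $\|\vv\|\le(\T+\s)/(\T\s)$ because the norm of a positive self-adjoint operator is the supremum of its quadratic form on the unit sphere, and your lemma that a monotone self-adjoint bounded $V$ is $\|V\|^{-1}$-cocoercive is correctly reduced to $\|V\|V-V^2=\sqrt{V}(\|V\|\id-V)\sqrt{V}\succeq 0$. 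For \ref{Vpropiedades:2} the paper proves \ref{vclosedi}$\Leftrightarrow$\ref{vclosedii} by chasing sequences --- given $(y_n,v_n)=\vv(x_n,u_n)\to(y,v)$ it applies $L\Upsilon$ to the first component and adds it to the second so as to isolate $(\Sigma^{-1}-L\Upsilon L^*)u_n$, and conversely writes $(0,v_n)=\vv(\Upsilon L^*u_n,u_n)$ --- and then gets \ref{vclosedi}$\Leftrightarrow$\ref{vclosediii} by composing with the isometric swap $(x,u)\mapsto(u,x)$ and reusing the first equivalence. Your congruences $\vv=E\,\mathrm{diag}(\Upsilon^{-1},\Sigma^{-1}-L\Upsilon L^*)\,E^*=F\,\mathrm{diag}(\Upsilon^{-1}-L^*\Sigma L,\Sigma^{-1})\,F^*$ are the systematic packaging of those same eliminations: both identities verify by block multiplication, $E$, $F$ and their adjoints are triangular with identity diagonal and hence boundedly invertible, so $\ran\vv=E\bigl(\H\times\ran(\Sigma^{-1}-L\Upsilon L^*)\bigr)=F\bigl(\ran(\Upsilon^{-1}-L^*\Sigma L)\times\G\bigr)$ and closedness transfers through the homeomorphisms in both directions simultaneously. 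What your route buys is symmetry and economy --- both equivalences and all implications follow from a single transfer principle, with no swap isometry and no separate converse argument; what the paper's route buys is that it never has to introduce or verify the factorizations, at the cost of handling each implication by a separate sequential argument.
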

\begin{proof} \ref{Vpropiedades:1}: It is a direct consequence
of \cite[Proposition~2.1]{SDR}.
\ref{Vpropiedades:2}: 
(\ref{vclosedi} $\Rightarrow$  \ref{vclosedii}). 
Let 
$(v_n)_{n \in \N}$ be 
sequence in $\ran (\Sigma^{-1} - L \Upsilon L^*)$ such that $v_n \rightarrow 
v.$ 
Therefore, for each $n \in 
\N,$ there exists $u_n \in \G$ such that $v_n=\Sigma^{-1} u_n -  L \Upsilon L^*
u_n$. 
Note that $\vv(\Upsilon L^* 
u_n ,  u_n)=(0, v_n)\rightarrow (0, v)$. Since $\ran\, \vv$ is 
closed, 
there exists some $(x,u) \in 
\H \times \G $ such that $\vv(x,u)=(0, v)$, i.e.,
\begin{align*}
	\vv(x,u)=(0,v) \quad&\Leftrightarrow\quad  
	\begin{cases} \Upsilon^{-1}x- 
	L^* u=0\\
		\Sigma^{-1} u- L x =  v 
	\end{cases}\\
	&\Rightarrow\quad  \Sigma^{-1} u-  L \Upsilon L^* u =  v.
\end{align*} 
Then $ v \in \ran (\Sigma^{-1} -  L \Upsilon L^*)$ and, therefore, $\ran (\Sigma^{-1}- L \Upsilon L^*)$ is closed.  

(\ref{vclosedii} $\Rightarrow$  \ref{vclosedi}). 
Let 
$\big((y_n,v_n)\big)_{n \in \N}$ be a 
sequence in 
$\ran\, \vv$ such that 
$(y_n,v_n) \rightarrow (y,v).$ Then, for every $n \in \N$, there 
exists 
$(x_n,u_n)$ such that 
$(y_n,u_n)=\vv(x_n,u_n)$, or equivalently,
\begin{equation}
	\label{unoydos_2}
	\begin{cases}
		 y_n=\Upsilon^{-1}x_n- L^*u_n\\
		 v_n=\Sigma^{-1} u_n- Lx_n.
	\end{cases}
\end{equation}
By applying $L \Upsilon $ to the first equation in \eqref{unoydos_2} and  
adding it to the second equation, by the continuity of $\Upsilon$ and $L$, we obtain 
\begin{equation}
	(\Sigma^{-1} -  L \Upsilon L^*)u_n= L \Upsilon y_n +  v_n\:\to\:  L  \Upsilon y +  v.
\end{equation}
Hence, since $\ran (\Sigma^{-1} - L \Upsilon L^*)$ is closed, 
there exists $u \in \G$ such that $L \Upsilon y+ v =(\Sigma^{-1} - L \Upsilon L^*)u$. 
We deduce $\vv\left( \Upsilon 
(L^*u+y),u \right)=(y,v)$ and, therefore, $\ran\, \vv$ is closed. 

(\ref{vclosedi} $\Leftrightarrow$  \ref{vclosediii}). Define 
$\tilde{\VV} : 
\G\oplus \H \to \G \oplus \H : (u,x) \mapsto (\Sigma^{-1} u-Lx,\Upsilon^{-1} x-L^*u)$.  By the equivalence \ref{vclosedi} $\Leftrightarrow$  \ref{vclosedii}
$\ran \tilde{\VV}$ is closed if and only if $\ran (\Upsilon^{-1} - L^* \Sigma L)$ is closed. 
Consider the isometric map $\boldsymbol{\Lambda} : \H \oplus \G 
\to 
\G \oplus \H : (x,u) 
\mapsto (u,x)$. Since $\boldsymbol{\Lambda}\circ 
\VV=\tilde{\VV}$, 
$\rvv$ is closed 
if 
and only if $\ran \tilde{\VV}$ is closed and the result follows.
  
\end{proof}

\begin{rem}
\label{r:closed}
\begin{enumerate}
\item 
\label{r:closed1}
In the case when $\|\sqrt{\Sigma} L\sqrt{\Upsilon}\|<1$,  we have that 
$\Upsilon^{-1} - L\Sigma L^*$ is strongly  monotone and, thus, 
invertible. This is indeed an equivalence which follows from 
\cite[eq. (2.7)]{SDR}.
Therefore, $\ran (\Upsilon^{-1} - L\Sigma L^*)=\G$ and 
$\ran\, \vv$ is closed in view of Proposition~\ref{Vpropiedades}.

\item 
\label{r:closed2}
Assume that $\ran L = \G$. Note that, for every $u \in \G$, 
$\scal{L\Upsilon L^* u}{u} \geq \T \|L^*u\|^2 \geq \T \alpha^2\|u\|^2$, 
where $\T > 0$ is the strong monotonicity parameter of $\Upsilon$ 
and the existence
of $\alpha > 0$ is guaranteed by \cite[Fact 2.26]{1}. Hence, by setting $\Sigma = (L\Upsilon L^*)^{-1}$ we have
$\Sigma^{-1}-  L \Upsilon L^*=0$. Hence, $\ran(\Sigma^{-1}-  L 
\Upsilon L^*)=\{0\}$ which is closed and 
Proposition~\ref{Vpropiedades} 
implies that $\ran\, \vv$ is closed. 
This case arises in wavelets transformations in image and signal 
processing (see, e.g., \cite{wavelet}).
\end{enumerate}
\end{rem}

The next theorem is the main result of this section, in 
which we interpret the primal-dual splitting as a relaxed proximal point 
algorithm (PPA)
applied to the primal-dual operator 
\begin{equation}
\label{e:defW}
\WW\colon \hh \rightarrow 2^{\hh}\colon(x,u)\mapsto\{(y,v)\in \hh \mid 
\vv(y,v) \in 
{\MM}(x,u)\},
\end{equation}
where $\MM$ and $\vv$ are defined in \eqref{def:M} and 
\eqref{e:defV}, respectively.
Note that, in the case when $\|\sqrt{\Sigma} L\sqrt{\Upsilon}\|<1$, 
$\vv$ is invertible and
$\WW=\vv^{-1}\MM$, which is maximally monotone in 
$(\HH,\pscal{\cdot}{\vv\cdot})$ in view of 
\cite[Proposition~20.24]{1}. This implies that $J_{\WW}$
is firmly nonexpansive under the same metric 
\cite[Proposition~23.8(iii)]{1}. These properties do not hold when 
$\|\sqrt{\Sigma} L\sqrt{\Upsilon}\|=1$, but $P_{\rvv}\circ J_{\WW}$ is 
firmly 
nonexpansive in the real Hilbert space $(\rvv,\pscal{\cdot}{\vv\cdot})$,
from which the weak convergence of primal-dual algorithm is obtained.

\begin{teo} \label{teo=1}
	In the context of Problem~\ref{prob2}, let $\vv$ be the 
	operator defined in 
	\eqref{e:defV}, where $\Sigma: \G \to \G$ and $\Upsilon : \H \to \H$ 
	are self-adjoint linear strongly monotone operators such that 
	$\|\sqrt{\Sigma} L\sqrt{\Upsilon}\|\le 1$, and suppose that $\ran\, 
	\vv$ is closed. Moreover, 
	let 
	$(\lambda_n)_{n\in\N}$ be a sequence in 
	$\left[0,2\right]$ 
	satisfying $\sum_{n\in\N}\lambda_n(2-\lambda_n)=\pinf$,  and 
consider the sequence $\big((x_n,u_n)\big)_{n \in \N}$ defined by the 
recurrence
\begin{equation}\label{ALG}
(\forall n\in\N)\quad 
	\begin{array}{l}
	\left\lfloor
	\begin{array}{l}
p_{n+1}=J_{\Upsilon A}(x_n-\Upsilon L^*u_n)\\
q_{n+1}=J_{\Sigma B^{-1}}\left( u_n+\Sigma L(2p_{n+1}-x_n)\right)\\
(x_{n+1},u_{n+1})=(1-\lambda_n)(x_n,u_n)+\lambda_n(p_{n+1},q_{n+1}),
\end{array}
\right.
\end{array}
\end{equation}
where $(x_0,u_0)\in \H\times\G$.
Then $\big(P_{\rvv} (x_n,u_n)\big)_{n \in \N}$ converges weakly in 
$(\ran\, \vv,
\pscal{\cdot }{\cdot}_{\vv})$ to some $ (\hat{y},\hat{v}) \in \fix 
(P_{\rvv}\circ J_{\WW})$, where $\WW$ is defined in \eqref{e:defW}. 
Moreover,
\begin{equation}
\big(J_{\Upsilon A}(\hat{y}-\Upsilon L^*\hat{v}),
J_{\Sigma B^{-1}}\left(\hat{v}+\Sigma L(2J_{\Upsilon A}(\hat{y}-\Upsilon L^*\hat{v})-\hat{y})\right)\big)
\end{equation}
is a solution to Problem \ref{prob2}.
\end{teo}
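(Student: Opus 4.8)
The plan is to exhibit \eqref{ALG} as the Krasnosel'ski\u{\i}--Mann iteration \eqref{FPSq} governed by $\Ss = J_{\WW}$ and then to invoke Proposition~\ref{teogenquasi}. First I would verify that the inner step $(x_n,u_n)\mapsto(p_{n+1},q_{n+1})$ coincides with $J_{\WW}(x_n,u_n)$. Unfolding \eqref{e:defW}, a pair $(p,q)$ satisfies $(p,q)=J_{\WW}(x,u)$ if and only if $\vv\big((x,u)-(p,q)\big)\in\MM(p,q)$; writing out the two components of $\vv$ from \eqref{e:defV} and of $\MM$ from \eqref{def:M}, the first inclusion rearranges to $\Upsilon^{-1}x-L^*u\in(\Upsilon^{-1}+A)p$, i.e. $p=J_{\Upsilon A}(x-\Upsilon L^*u)$, and, substituting this $p$, the second rearranges to $\Sigma^{-1}u+L(2p-x)\in(\Sigma^{-1}+B^{-1})q$, i.e. $q=J_{\Sigma B^{-1}}(u+\Sigma L(2p-x))$. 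These are exactly the first two lines of \eqref{ALG}, and since $\Upsilon^{-1}+A$ and $\Sigma^{-1}+B^{-1}$ are strongly monotone, $J_{\WW}$ is single valued with full domain. Hence \eqref{ALG} is precisely \eqref{FPSq} with $\Ss=J_{\WW}$. Moreover $\fix J_{\WW}=\zer\WW$, and $(x,u)\in\zer\WW\Leftrightarrow(0,0)=\vv(0,0)\in\MM(x,u)\Leftrightarrow(x,u)\in\zer\MM=\bm{Z}$, so $\fix J_{\WW}=\bm{Z}\neq\varnothing$.

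Next I would check the structural hypotheses of Problem~\ref{prob1} for $\Ss=J_{\WW}$. By Proposition~\ref{Vpropiedades}\ref{Vpropiedades:1} the operator $\vv$ is monotone, bounded, and self-adjoint, and $\rvv$ is assumed closed, so $\hh=\ker\vv\oplus\rvv$ with $\vv P_{\rvv}=\vv$. Consequently $\vv\big((x,u)-(p,q)\big)=\vv\big(P_{\rvv}(x,u)-(p,q)\big)$, so the defining inclusion of $J_{\WW}(x,u)$ depends on $(x,u)$ only through $P_{\rvv}(x,u)$, which gives $J_{\WW}=J_{\WW}\circ P_{\rvv}$. The crucial point, and the main obstacle, is that $\TT:=(P_{\rvv}\circ J_{\WW})|_{\rvv}$ is firmly nonexpansive in $(\rvv,\pscal{\cdot}{\cdot}_{\vv})$. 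To prove this, take $\bm{x}_1,\bm{x}_2\in\rvv$, put $\bm{a}_i=J_{\WW}\bm{x}_i$, so $\vv(\bm{x}_i-\bm{a}_i)\in\MM\bm{a}_i$. Using $\vv P_{\rvv}=\vv$ together with the self-adjointness of $\vv$ and $P_{\rvv}$ and the fact that $\vv(\cdot)\in\rvv$, one computes $\pscal{\TT\bm{x}_1-\TT\bm{x}_2}{\TT\bm{x}_1-\TT\bm{x}_2}_{\vv}=\scal{\bm{a}_1-\bm{a}_2}{\vv(\bm{a}_1-\bm{a}_2)}$ and $\pscal{\bm{x}_1-\bm{x}_2}{\TT\bm{x}_1-\TT\bm{x}_2}_{\vv}=\scal{\bm{x}_1-\bm{x}_2}{\vv(\bm{a}_1-\bm{a}_2)}$, so the firm nonexpansiveness inequality reduces to $\scal{(\bm{x}_1-\bm{a}_1)-(\bm{x}_2-\bm{a}_2)}{\vv(\bm{a}_1-\bm{a}_2)}\ge0$, which equals $\scal{\vv(\bm{x}_1-\bm{a}_1)-\vv(\bm{x}_2-\bm{a}_2)}{\bm{a}_1-\bm{a}_2}\ge0$ and therefore holds by monotonicity of $\MM$. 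This is exactly where the $\vv$-geometry and the monotonicity of $\MM$ are genuinely used. In particular $\TT$ is quasinonexpansive in $(\rvv,\pscal{\cdot}{\cdot}_{\vv})$, so we are in the context of Problem~\ref{prob1}.

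Finally, since $\TT$ is firmly nonexpansive in $(\rvv,\pscal{\cdot}{\cdot}_{\vv})$ it is $\tfrac{1}{2}$-averaged there, so I would apply Proposition~\ref{teogenquasi}\ref{teogenquasiquasihyp2} with $\alpha=\tfrac{1}{2}$: the relaxation condition becomes $\lambda_n\in[0,2]$ and $\sum_{n\in\N}\lambda_n(1-\tfrac{1}{2}\lambda_n)=\tfrac{1}{2}\sum_{n\in\N}\lambda_n(2-\lambda_n)=\pinf$, matching the hypotheses of the theorem exactly. The conclusion of Proposition~\ref{teogenquasi} then yields that $\big(P_{\rvv}(x_n,u_n)\big)_{n\in\N}$ converges weakly in $(\rvv,\pscal{\cdot}{\cdot}_{\vv})$ to some $(\hat{y},\hat{v})\in\fix\TT=\fix(P_{\rvv}\circ J_{\WW})$, and that $\Ss(\hat{y},\hat{v})=J_{\WW}(\hat{y},\hat{v})$ solves Problem~\ref{prob1}, hence lies in $\fix J_{\WW}=\bm{Z}$. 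Evaluating $J_{\WW}(\hat{y},\hat{v})$ through the explicit formula obtained in the first paragraph produces exactly the announced pair $\big(J_{\Upsilon A}(\hat{y}-\Upsilon L^*\hat{v}),J_{\Sigma B^{-1}}(\hat{v}+\Sigma L(2J_{\Upsilon A}(\hat{y}-\Upsilon L^*\hat{v})-\hat{y}))\big)$, which is therefore a solution to Problem~\ref{prob2}.
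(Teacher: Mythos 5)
Your proposal is correct and follows essentially the same route as the paper's proof: identify the inner step of \eqref{ALG} with $J_{\WW}$, verify $\fix J_{\WW}=\bm{Z}$ and $J_{\WW}=J_{\WW}\circ P_{\rvv}$, establish firm nonexpansiveness of $P_{\rvv}\circ J_{\WW}$ on $(\rvv,\pscal{\cdot}{\cdot}_{\vv})$ from the monotonicity of $\MM$, and invoke Proposition~\ref{teogenquasi} with $\alpha=1/2$. The only (cosmetic) difference is that the paper obtains $J_{\WW}=J_{\WW}\circ P_{\rvv}$ by factoring $J_{\WW}=\bm{R}\circ\vv$ through an explicit operator $\bm{R}$, whereas you read it off directly from the defining inclusion via $\vv=\vv\circ P_{\rvv}$.
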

\begin{proof} 
First, it follows from 
Proposition~\ref{Vpropiedades}\eqref{Vpropiedades:1} that 
$\vv$ is a monotone self-adjoint linear bounded operator. Note that
\begin{equation}
\label{e:fixrel}
\fix J_{\WW}=\zer{\WW}=\zer{\MM}=\bm{Z}\neq\varnothing
\end{equation}
and, for every $(x,u)$ and $(p,q)$ in 
$\hh$,
\begin{align}
\label{e:JW}
(p,q)\in J_{\WW}(x,u)\quad  
&\Leftrightarrow\quad   (x-p,u-q) \in {\WW}(p,q) \nonumber\\
&\Leftrightarrow\quad   \vv(x-p,u-q) \in {\MM}(p,q)\nonumber\\
& \Leftrightarrow \quad \begin{cases}
\Upsilon^{-1} (x-p)-L^*(u-q) \in Ap + L^*q,\\
\Sigma^{-1}(u-q)- L(x-p) \in B^{-1}q-Lp.
\end{cases}\nonumber\\
& \Leftrightarrow \quad \begin{cases}
p = J_{\Upsilon A}(x-\Upsilon L^*u),\\
q=J_{\Sigma B^{-1}} \left( u+\Sigma 
L(2p-x)\right).
\end{cases}
\end{align}
Hence, $J_{\WW}$ is single valued
and,  for every $(x,u)\in\hh$, 
\begin{align*}
J_{\WW}(x,u)
&=\big(J_{\Upsilon A} \left( x-\Upsilon  L^*u ),J_{\Sigma B^{-1}}( u-\Sigma Lx+2\Sigma L J_{\Upsilon  
A}\left(x-\Upsilon  L^*u \right) 
\right) \big)\\
&=\bm{R}\left(\Upsilon^{-1}x-L^*u,\Sigma^{-1} u  -Lx\right)\\
&=\bm{R}(\vv(x,u)),
 \end{align*}
where $\bm{R}\colon(x,u) \mapsto \big(J_{\Upsilon  A} (\Upsilon  x), J_{\Sigma B^{-1}}( \Sigma 
u + 2\Sigma L J_{\Upsilon  	A}(\Upsilon  x) ) \big)$, which yields
\begin{equation}
\label{e:SesSP}
J_{\WW}=\bm{R}\circ \vv=\bm{R}\circ \vv\circ 
P_{\ran\vv}=J_{\WW}\circ P_{\ran\vv}.
\end{equation} 
Moreover, by defining $\TT=P_{\rvv}\circ J_{\WW}$, we deduce 
from $\vv=\vv\circ P_{\rvv}$, \eqref{e:defW}, $\ker\vv\oplus\rvv=\HH$, 
and the monotonicity of 
$\bm{M}$ that, for every $\bm{z}$ and $\bm{w}$ in 
$\rvv$, 
\begin{align}
&\pscal{\TT\bm{z}-\TT\bm{w}}
{(\Id-\TT)\bm{z}-(\Id-\TT)\bm{w}}_{\vv} \nonumber\\
&\hspace{3cm}=\pscal{J_{\WW}\bm{z}-J_{\WW}\bm{w}}
{\vv(\bm{z}-J_{\WW}\bm{z})-\vv(\bm{w}-J_{\WW}\bm{w})}\nonumber\\
&\hspace{3cm}\geq 0,
\end{align}
which yields the firm nonexpansivity of $\TT$ in 
$(\rvv,\pscal{\cdot}{\vv\cdot})$.
Therefore, it follows from \eqref{e:fixrel} and \eqref{e:SesSP} 
that Problem~\ref{prob2} is a particular instance of 
Problem~\ref{prob1} with $\Ss=J_{\WW}$. In addition,
\eqref{ALG} and \eqref{e:JW} yields
\begin{equation}
(\forall n\in\N)\quad \x_{n+1}=(1-\lambda_n)\x_n+\lambda_n 
J_{\WW}\x_n,
\end{equation}
where, for every $n\in\N$,
$\x_n=(x_n,u_n)$. 
Altogether, we obtain the results by applying 
Theorem~\ref{teogenquasi}(ii)
with $\alpha=1/2$ and $\Ss=J_{\WW}$.
  \end{proof}

\begin{rem}
	\label{r:optim}
	\begin{enumerate}
\item Since $J_{\WW}\circ P_{\rvv} = J_{\WW}$, the 
sequence $\big(P_{\ran\vv}(x_n,u_n)\big)_{n\in\N}$ is not needed in 
practice. Indeed, since
\begin{equation*}
(\forall \x \in \hh)\quad \|\x\|_{\vv}=\|P_{\rvv} \x\|_{\vv},
\end{equation*}
we can use a stopping criteria only involving $\big((x_n,u_n)\big)_{n 
\in \mathbb{N}}.$

\item\label{r:optim2} Suppose that $\G=\oplus_{i=1}^m\G_i$, 
$B\colon (u_i)_{1\le i\le m}\mapsto \times_{i=1}^m B_iu_i$,
$\Sigma\colon (u_i)_{1\le i\le m}\mapsto (\Sigma_iu_i)_{1\le i\le m}$,
and $L\colon x\mapsto (L_ix)_{1\le i\le m}$,
where, for every $i\in\{1,\ldots,m\}$, $\G_i$ is a real Hilbert space, 
$B_i$ is maximally monotone, $\Sigma_i\colon \G_i\to\G_i$ is a 
strongly monotone self-adjoint linear bounded operator,
and $L_i\colon\H\to\G_i$ is a linear bounded operator. In this 
context, the inclusion in \eqref{e:priminc} is equivalent to
\begin{equation}
\label{e:vured}
\text{find}\quad x\in\H\quad\text{such that}\quad  0 \in A 
x+\sum_{i=1}^{m} L_{i}^{*}B_{i} 
L_{i} x.
\end{equation}
Then, under the assumptions 
\begin{equation} \label{e:CondialgPDSum}
\sum_{i=1}^m\left\|\sqrt{\Sigma_i}L_i\sqrt{\Upsilon}\right\|^2\le 1\quad 
\text{and}\quad 
\ran\left(\Upsilon^{-1}-\sum_{i=1}^mL_i^*\Sigma_iL_i\right)\quad
\text{is closed,}
\end{equation}
Proposition~\ref{Vpropiedades} and Theorem~\ref{teo=1} ensures 
the convergence 
of \eqref{ALG}, which reduces to 
\begin{equation}\label{e:algPDSum}
	(\forall n\in\N)\quad 
	\begin{array}{l}
	\left\lfloor
	\begin{array}{l}
{p}_{n+1}= J_{\varUpsilon A} ({x}_n -\varUpsilon \sum_{i=1}^mL_i^* 
{u}_{i,n})\\
x_{n+1}=(1-\lambda_n)x_n+\lambda_n p_{n+1}\\
\text{for } i = 1, \ldots, m  \\
\left\lfloor
	\begin{array}{l}
	{q}_{i,n+1}=J_{\Sigma_i B_i^{-1}}({u}_{i,n}+\Sigma_i
L_i(2{p}_{n+1}-{x}_n))\\
	u_{i,n+1}=(1-\lambda_n)u_{i,n}+\lambda_n q_{i,n+1}.
	\end{array}\right.
\end{array}
\right.
\end{array}
\end{equation}
Note that \eqref{e:algPDSum} has the same structure than the 
algorithm in \cite[Corolary 6.2]{vuvarmet} without considering 
cocoercive operators and the convergence is guaranteed under the 
weaker assumption \eqref{e:CondialgPDSum} in view of 
Remark~\ref{r:closed}\eqref{r:closed1}.

\item\label{r:optimi} In the context of 
the optimization problem in \eqref{e:optim}, \eqref{ALG} 
reduces to
\begin{equation}\label{ALGoptim}
(\forall n\in\N)\quad 
	\begin{array}{l}
	\left\lfloor
	\begin{array}{l}
p_{n+1}=\prox_{f}^{\Upsilon^{-1}} (x_n-\Upsilon L^*u_n)\\
q_{n+1}=\prox_{ g^*}^{\Sigma^{-1}} \left( u_n+\Sigma L(2p_{n+1}-x_n)\right)\\
(x_{n+1},u_{n+1})=(1-\lambda_n)(x_n,u_n)+\lambda_n(p_{n+1},q_{n+1}),	
\end{array}
\right.
\end{array}
\end{equation}
%where $g^*\colon u \mapsto \sup_{v \in \G} \left(\scal{u}{v}-g(v) \right) $ is the the Fenchel conjugate of $g$. 
Under the additional condition 
$\ran\vv$ closed, Theorem~\ref{teo=1} generalizes 
\cite[Theorem~3.3]{condat} to infinite 
dimensional spaces and allowing preconditioners and a larger choice 
of parameters 
$(\lambda_n)_{n\in\N}$. 
Indeed,  in finite 
dimensional spaces, $\ran\, \vv$ is closed, 
Theorem~\ref{teo=1} implies that 
$P_{\rvv} (x_n,u_n)\to (\hat{y},\hat{v}) \in \rvv$ in $(\ran\, \vv,
\pscal{\cdot }{\cdot}_{\vv})$ and, since $J_{\WW}=J_{\WW}\circ 
P_{\rvv}$ is continuous, we conclude
 $(p_{n+1},q_{n+1})=J_{\WW}(x_n,u_n)=J_{\WW}(P_{\rvv} 
 (x_n,u_n))\to
J_{\WW}(\hat{y},\hat{v})\in \bm{Z}$. In order to 
guarantee the convergence of the relaxed sequence 
$((x_n,u_n))_{n\in\N}$, it is enough to suppose 
$(\lambda_n)_{n\in\N}\subset\left[\epsilon, 2-\epsilon\right]$
for some $\epsilon\in\left]0,1\right[$, and use the argument 
in \cite[p.473]{condat}.

\item In the particular case when $ \|\sqrt{\Sigma}L\sqrt{\Upsilon}\| < 
1$, it follows from \cite[eq. (6.15)]{combvu} (see also 
\cite[Lemma~1]{CPockDiag}) that 
$\vv$ is strongly monotone, which yields $\rvv=\hh$ and 
$P_{\rvv}=\Id$. Hence, we recover from Theorem~\ref{teo=1} the
weak convergence of $((x_n,u_n))_{n\in\N}$ to a solution to 
Problem~\ref{prob2} proved 
in \cite{bot2013,combvu,condat,yuan,CPockDiag,vu}. 

\item In the particular instance when $\lambda_n\equiv1$, 
the weak convergence of \eqref{ALG} is deduced without any range 
closedness in \cite[Remark~3.4(4)]{SDR}. The result is obtained from 
an alternative formulation of the algorithm and the extension to 
$\lambda_n\not\equiv1$ is not clear. As we will show in 
Section~\ref{sec:TV}, the additional relaxation step is relevant in the 
efficiency of the algorithm.
\end{enumerate}
\end{rem}

%%%%%%%%%%%%%%%%%%%%%%%%%%%%%%%%%%%%%%%%%%%%%%%%%%%%%%
%%%%%%%%%%%%%%%%%%%%%%%%%%%%%%%%%%%%%%%%%%%%%%%%%%%%%%
%%%%%%%%%%%%%%%%%%%%%%%%%%%%%%%%%%%%%%%%%%%%%%%%%%%%%%
%%%%%%%%%%%%%%%%%%%%%%%%%%%%%%%%%%%%%%%%%%%%%%%%%%%%%%

\subsection{Case $L=\id$: Douglas--Rachford splitting}
\label{sec:DR}
In this section, we study the particular case 
of Problem~\ref{prob2} when $L=\id$. In this context,
the following result is a refinement of Theorem~\ref{teo=1},
which relates 
the primal-dual algorithm in \eqref{ALG} with 
Douglas-Rachford splitting (DRS) when
 \begin{equation}
\label{e:tausigmaDR}
 \Upsilon=\Sigma^{-1} \text{ is strongly monotone}.
 \end{equation}
When $\Upsilon=\tau\id$ and $\Sigma=\s\id$, \eqref{e:tausigmaDR} 
reads $\sigma\tau=1$ and the connection of \eqref{ALG} with DRS
is discovered in \cite[Section~4.2]{cp} in the optimization 
context. However, the convergence is guaranteed only if $\T\s<1$, 
which is extended to the case $\sigma\tau=1$ in 
\cite[Section~3.1.3]{condat} in the finite dimensional setting.
Previous connection allows us to recover the classical convergence 
results 
in \cite{eckstein-bertsekas,lions-mercier79} when $\Upsilon = \T \id$ 
with our approach. Define the operator \begin{equation}
\label{e:opDRS}
G_{\Upsilon,B,A}=J_{\Upsilon B}\circ(2J_{\Upsilon A} -\id)+(\id-J_{\Upsilon A}),
\end{equation}
and we recall that relaxed DRS iterations are defined by the 
recurrence
\begin{equation}
z_0\in\H,\quad (\forall n\in\N)\quad 
z_{n+1}=(1-\lambda_n)z_n+\lambda_nG_{\Upsilon ,B,A}z_n,
\end{equation} 
where $(\lambda_n)_{n\in\N}$ is a sequence in 
$\left[0,2\right]$.

\begin{prop}\label{teo:DRour}
In the context of Problem~\ref{prob2}, set $L=\id$, let $\Upsilon$ be a 
strongly monotone self-adjoint linear operator,
let $(\lambda_n)_{n\in\N}$ be a sequence in 
$\left[0,2\right]$ 
satisfying $\sum_{n\in\N}\lambda_n(2-\lambda_n)=\pinf$,  and 
consider the sequence $\big((x_n,u_n)\big)_{n \in \N}$ defined by the 
recurrence
\begin{equation}\label{ALGDR}
(\forall n\in\N)\quad 
	\begin{array}{l}
	\left\lfloor
	\begin{array}{l}
	p_{n+1}=J_{\Upsilon A}(x_n-\Upsilon u_n)\\
q_{n+1}=J_{\Upsilon^{-1}B^{-1}}\big( u_n+\Upsilon^{-1}(2p_{n+1}-x_n)\big)\\
(x_{n+1},u_{n+1})=(1-\lambda_n)(x_n,u_n)+\lambda_n(p_{n+1},q_{n+1}),
\end{array}
\right.
\end{array}
\end{equation}
where $(x_0,u_0)\in \H\times\H$. 
Then, by setting, for every $n\in\N$, $z_n=x_n-\Upsilon y_n$,  
$(z_n)_{n \in \N}$ 
converges 
weakly in $\H$ to some
$\hat{z}\in \fix G_{\Upsilon,B,A}$ and 
$$\left(J_{\Upsilon A} \hat{z}, 
-\Upsilon^{-1}(\hat{z}-J_{\Upsilon A}\hat{z})\right)$$ is a solution to 
Problem \ref{prob2}.
Moreover, we have
\begin{equation}
\label{e:algDRclassic}
(\forall n\in\N)\quad 
z_{n+1}=(1-\lambda_n)z_n+\lambda_nG_{\Upsilon,B,A}z_n.
\end{equation}
\end{prop}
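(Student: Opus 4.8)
The plan is to deduce everything from Theorem~\ref{teo=1} by translating both the recurrence and the weak convergence through an explicit linear map. First I would check that the hypotheses apply: since $L=\id$ and $\Sigma=\Upsilon^{-1}$, one has $\|\sqrt{\Sigma}L\sqrt{\Upsilon}\|=\|\id\|=1$ and $\ran(\Upsilon^{-1}-L^*\Sigma L)=\ran(\Upsilon^{-1}-\Upsilon^{-1})=\{0\}$, which is closed. Hence Proposition~\ref{Vpropiedades} guarantees that $\rvv$ is closed and Theorem~\ref{teo=1} applies to \eqref{ALGDR}, so that $\big(P_{\rvv}(x_n,u_n)\big)_{n\in\N}$ converges weakly in $(\rvv,\pscal{\cdot}{\cdot}_\vv)$ to some $(\hat y,\hat v)\in\fix(P_{\rvv}\circ J_{\WW})$.

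Next I would establish the recurrence \eqref{e:algDRclassic}. The key tool is the operator inverse-resolvent identity $\Upsilon J_{\Upsilon^{-1}B^{-1}}(\Upsilon^{-1}w)=w-J_{\Upsilon B}(w)$, obtained by a direct manipulation of the inclusions defining the two resolvents (it is the operator counterpart of \eqref{e:Moreau_nonsme}). Writing $z_n=x_n-\Upsilon u_n$ and noting $p_{n+1}=J_{\Upsilon A}z_n$, the argument of the second resolvent rewrites as $u_n+\Upsilon^{-1}(2p_{n+1}-x_n)=\Upsilon^{-1}(2p_{n+1}-z_n)=\Upsilon^{-1}\big((2J_{\Upsilon A}-\id)z_n\big)$, so $\Upsilon q_{n+1}=(2p_{n+1}-z_n)-J_{\Upsilon B}(2p_{n+1}-z_n)$. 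Substituting yields $p_{n+1}-\Upsilon q_{n+1}=J_{\Upsilon B}(2J_{\Upsilon A}z_n-z_n)+(z_n-J_{\Upsilon A}z_n)=G_{\Upsilon,B,A}z_n$, and since $z_{n+1}=(1-\lambda_n)z_n+\lambda_n(p_{n+1}-\Upsilon q_{n+1})$ this gives \eqref{e:algDRclassic}.

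For the weak convergence of $(z_n)_{n\in\N}$, I would introduce the bounded linear map $\Phi\colon\hh\to\H\colon(x,u)\mapsto x-\Upsilon u$, so that $z_n=\Phi(x_n,u_n)$. A short computation gives $\ker\vv=\menge{(\Upsilon u,u)}{u\in\H}$, on which $\Phi$ vanishes; hence $\Phi=\Phi\circ P_{\rvv}$ and $z_n=\Phi P_{\rvv}(x_n,u_n)$. Because on the closed subspace $\rvv$ the norms $\pnorm{\cdot}_\vv$ and $\|\cdot\|$ are equivalent (Proposition~\ref{prophilbert}), the weak convergence of $P_{\rvv}(x_n,u_n)$ in $(\rvv,\pscal{\cdot}{\cdot}_\vv)$ transfers to weak convergence in $\hh$, and applying $\Phi$ yields $z_n\weakly\hat z:=\Phi(\hat y,\hat v)$ weakly in $\H$.

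Finally I would identify the limit. Setting $(p,q)=J_{\WW}(\hat y,\hat v)\in\bm{Z}$, which is a solution by Theorem~\ref{teo=1}, and using $(\hat y,\hat v)=P_{\rvv}(p,q)$ together with $\Phi=\Phi\circ P_{\rvv}$ gives $\hat z=\Phi(p,q)=p-\Upsilon q$, while \eqref{e:JW} gives $p=J_{\Upsilon A}(\hat y-\Upsilon\hat v)=J_{\Upsilon A}\hat z$ and, via the inverse-resolvent identity, $J_{\Upsilon B}(2p-\hat z)=(2p-\hat z)-\Upsilon q=p$ (using $\Upsilon q=p-\hat z$). Therefore $G_{\Upsilon,B,A}\hat z=J_{\Upsilon B}(2J_{\Upsilon A}\hat z-\hat z)+\hat z-J_{\Upsilon A}\hat z=\hat z$, so $\hat z\in\fix G_{\Upsilon,B,A}$, and $q=\Upsilon^{-1}(p-\hat z)=-\Upsilon^{-1}(\hat z-J_{\Upsilon A}\hat z)$ identifies $(p,q)$ with the claimed solution. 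I expect the main obstacle to be the bookkeeping in the recurrence step, namely invoking the operator inverse-resolvent identity correctly and exploiting $\Phi=\Phi\circ P_{\rvv}$; the convergence transfer is then routine once $\Phi$ is seen to annihilate $\ker\vv$.
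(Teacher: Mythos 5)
Your proof is correct and follows essentially the same route as the paper: reduce to Theorem~\ref{teo=1} (checking $\|\sqrt{\Sigma}L\sqrt{\Upsilon}\|=1$ and closedness of $\rvv$), transport everything through the map $(x,u)\mapsto x-\Upsilon u$ (the paper's $\Lambda$, your $\Phi$), and use the inverse-resolvent identity $\Upsilon\, J_{\Upsilon^{-1}B^{-1}}\circ\Upsilon^{-1}=\id-J_{\Upsilon B}$ to identify $\Lambda\circ J_{\WW}$ with $G_{\Upsilon,B,A}\circ\Lambda$, which yields both \eqref{e:algDRclassic} and the fixed-point/limit identification. The only (valid) deviation is the final weak-convergence transfer: you invoke the equivalence of $\|\cdot\|$ and $\|\cdot\|_{\vv}$ on the closed subspace $\rvv$ (Proposition~\ref{prophilbert}) to pass from weak convergence in $(\rvv,\pscal{\cdot}{\cdot}_{\vv})$ to weak convergence in $\hh$ and then push forward by the bounded operator $\Phi$, whereas the paper tests against explicit preimages $w=\Lambda(p,q)$ using $\Lambda^*\Lambda=\vv\circ\boldsymbol{\Upsilon}$; both arguments are sound.
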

\begin{proof}
Note that, since $L=\id$, \eqref{e:defV} and \eqref{e:tausigmaDR} 
yield $\vv\colon (x,u)\mapsto (\Upsilon^{-1}x-u,\Upsilon u-x)$ and
Remark~\ref{r:closed}\eqref{r:closed2} 
implies that
$\ran\, \vv$ is closed. Hence, it follows from 
Theorem~\ref{teo=1} and \eqref{e:JW}
in the case $L=\id$
that $(P_{\ran\,\vv}(x_n,u_n))_{n\in\N}$
converges weakly in 
$(\rvv,\pscal{\cdot}{\cdot}_{\vv})$ to some 
$(\hat{y},\hat{v})\in\fix (P_{\rvv}\circ J_{\WW})$
and 
\begin{equation}
\label{e:solhat}
(\hat{x},\hat{u})=J_{\WW}(\hat{y},\hat{v})=\big(J_{\Upsilon 
A}(\hat{y}-\Upsilon\hat{v}),
J_{\Upsilon^{-1} B^{-1}}\left(\hat{v}+\Upsilon^{-1}(2\hat{x}-\hat{y})\right)\big)\in\bm{Z}.
\end{equation}
Set $\Lambda\colon  (x,u)\mapsto x-\Upsilon u$, and $ \boldsymbol{\Upsilon} \colon (x,u) \mapsto (\Upsilon x, \Upsilon u)$.
Note that $\Lambda$ is surjective, that 
\begin{equation}
\label{e:propLamb}
\Lambda^*   \Lambda = \vv \circ \boldsymbol{\Upsilon},\quad 
\ran \vv =\ran\Lambda^*,
\end{equation}
and, in view of 
\cite[Fact~2.25(iv)]{1}, that
\begin{equation}
\label{e:descdsum}
\H\times\H=\ran\Lambda^*\oplus\ker\Lambda.
\end{equation}
Then, for every $(x,u)\in\H\times\H$, it follows from 
\eqref{e:defW} and \eqref{e:JW} in the case $L=\id$, 
\eqref{e:tausigmaDR},
\cite[Proposition~23.34 (iii)]{1}, and \eqref{e:opDRS} that
\begin{align}
\label{e:relopsDR}
\Lambda(J_{{\WW}}(x,u))&=J_{\Upsilon  A} \left( x-\Upsilon u )-\Upsilon J_{\Upsilon^{-1} B^{-1}}\Upsilon^{-1}( 
\Upsilon u-  x+2 J_{\Upsilon
A}\left(x-\Upsilon u \right)
\right)\nonumber\\
&= -J_{\Upsilon A } (\Lambda(x,u)) + \Lambda(x,u) + J_{\Upsilon B}(2J_{\Upsilon A } 
(\Lambda(x,u))-\Lambda(x,u))\nonumber\\
&=G_{\Upsilon,B,A}(\Lambda(x,u)).
\end{align}
Moreover, since $(\hat{y},\hat{v})\in
\fix(P_{\rvv}\circ J_{\WW})$, by using 
\eqref{e:relopsDR}, \eqref{e:descdsum},
 and \eqref{e:propLamb} we deduce 
\begin{align}
\label{e:incfixpoints}
G_{\Upsilon,B,A}(\Lambda(\hat{y},\hat{v}))&=\Lambda 
(J_{\WW}(\hat{y},\hat{v}))\nonumber\\
&=\Lambda\circ 
P_{\ran\,\Lambda^*}(J_{\WW}(\hat{y},\hat{v}))\nonumber\\
&=\Lambda(P_{\rvv}\circ J_{\WW}(\hat{y},\hat{v}))\nonumber\\
&=\Lambda(\hat{y},\hat{v}),
\end{align}
and, thus, defining $\hat{z}=\Lambda(\hat{y},\hat{v})$, we obtain
$\hat{z}\in \fix G_{\Upsilon,B,A}$. In addition, it follows from 
 \eqref{e:solhat} that  $\hat{x}=J_{\Upsilon A}\hat{z}$ and, since
$(\hat{y},\hat{v})\in\fix (P_{\rvv}\circ J_{\WW})$,
we deduce from \eqref{e:descdsum} and \eqref{e:JW} that
\begin{equation}
\hat{z}=\Lambda(\hat{y},\hat{v})=\Lambda \left(P_{\rvv}\circ 
J_{\WW}(\hat{y},\hat{v})\right)
 =\Lambda 
 J_{\WW}(\hat{y},\hat{v})=\Lambda(\hat{x},\hat{u})=\hat{x}-\Upsilon\hat{u},
\end{equation}
 which yields
 $\hat{u}=-\Upsilon^{-1}(\hat{z}-J_{\Upsilon
 		A}\hat{z})$.
Furthermore, noting that, for every $n\in\N$, 
$z_n=\Lambda(x_n,u_n)$, we deduce from \eqref{ALGDR},
\eqref{e:JW}, and \eqref{e:relopsDR} that
\begin{align}
(\forall n\in\N)\quad 
z_{n+1}&=\Lambda(x_{n+1},u_{n+1})\nonumber\\
&=(1-\lambda_n)\Lambda(x_n,u_n)+\lambda_n 
\Lambda \left(J_{\WW}(x_n,u_n)\right)\nonumber\\
&=(1-\lambda_n)z_n+\lambda_n G_{\Upsilon ,B,A}z_n.
\end{align}
Finally, in order to prove the weak convergence of $(z_n)_{n\in\N}$ to 
$\hat{z}$, fix $w\in\H$ and set $(p,q)=((\id+\Upsilon^2)^{-1}w,-\Upsilon (\id+\Upsilon^2)^{-1} w)$. We have $(p,q)\in\ran\,\Lambda^*$, 
$\Lambda(p,q)=w$ and it follows from \eqref{e:descdsum},
\eqref{e:propLamb}, $(\hat{y},\hat{v})\in\rvv=\ran \Lambda^*$, and $(P_{\ran\,\vv}(x_n,u_n))_{n\in\N} \weakly (\hat{y},\hat{v})$ that
\begin{align}
\scal{z_n-\hat{z}}{w}&=\scal{\Lambda(x_n-\hat{y},u_n-\hat{v})}{\Lambda(p,q)}\nonumber\\
&=\scal{\Lambda\, 
P_{\ran\,\Lambda^*}(x_n-\hat{y},u_n-\hat{v})}{\Lambda(p,q)}\nonumber\\
&=\pscal{P_{\ran\,\Lambda^*}(x_n-\hat{y},u_n-\hat{v})}{\vv  (\Upsilon p, \Upsilon q)}\nonumber\\
&=\pscal{P_{\ran\,\Lambda^*}(x_n-\hat{y},u_n-\hat{v})}{(\Upsilon p, \Upsilon q)}_{\vv}\nonumber\\
&=\pscal{P_{\ran\,\Lambda^*}(x_n,u_n)-(\hat{y},\hat{v})}{(\Upsilon p, \Upsilon q)}_{\vv}\to
 0
\end{align}
and the result follows.
  
\end{proof}

\begin{rem}
\begin{enumerate}
\item From the proof of Proposition~\ref{teo:DRour}, we 
deduce
$\Lambda(\fix (P_{\rvv}\circ J_{\WW}))\subset \fix G_{\Upsilon,B,A}$. 
The 
converse inclusion 
is also true, as detailed in Proposition~\ref{prop:propiedad} in 
the Appendix. 

\item Proposition~\ref{teo:DRour} provides a connection between 
classical 
Douglas--Rachford scheme 
\cite{eckstein-bertsekas} and the primal-dual version in 
\eqref{ALGDR}, and we obtain 
that the auxiliary sequence $(z_n)_{n\in\N}$ converges weakly to a 
$\hat{z}$ whose 
\textit{primal-dual
shadow} is a primal-dual solution. In \cite{svaiter} (see also 
\cite{BauschkeFirmly,BausMoursi}) the weak 
convergence of the primal-dual
shadow sequence is proved in the case $\lambda_n\equiv 1$, 
by reformulating DRS as an alternative algorithm with
 primal-dual iterates in $\gra A$. This technique does not allow 
for relaxation steps, since after relaxation the iterates are no longer 
in $\gra A$ unless it is affine linear. 
\end{enumerate}
\end{rem}
 
\section{Numerical experiments}
\label{sec:TV}
A classical model in image processing is the total variation 
image restoration \cite{ROF}, which aims at 
recovering an image from a blurred and noisy observation under
 piecewise constant assumption on the solution.
The model is formulated via the optimization problem
\begin{equation}\label{pro:TV}
\min_{x \in [0,255]^{N}} 
\frac{1}{2}\|Rx-b\|^2_{2}+\alpha\|\nabla x\|_1=:F^{TV}(x),
\end{equation} 
where $x\in [0,255]^{N}$ is the image of $N=N_1 \times N_2$ pixels 
to recover from a 
blurred and noisy observation $b\in \R^{m}$, $R : 
\R^{N} \rightarrow \R^{m}$ is a linear operator representing a 
Gaussian blur, the 
discrete gradient
$\nabla\colon x\mapsto (D_1x,D_2x)$ includes horizontal 
and 
vertical differences through linear operators $D_1\colon\R^N\to\R^N$ 
and $D_2\colon\R^N\to\R^N$, 
respectively, 
its adjoint $\nabla^*$ is the discrete divergence
(see, e.g., \cite{TV-chambolle}), and $\alpha \in \RPP$. 
 A difficulty in this model is 
the presence of the non-smooth $\ell^1$ 
norm composed with the discrete 
gradient operator $\nabla$, which is non-differentiable and 
its proximity operator has not a closed form. 

Note that, by setting $f=\|R\cdot-b\|^2/2$, 
$g_1=\alpha\|\cdot\|_1=g_2$,
and $g_3=\iota_{[0,255]^N}$, $L_1=D_1$, $L_2=D_2$, 
and $L_3=\id$,
 \eqref{pro:TV} can be reformulated as $\min (f+\sum_{i=1}^3g_i\circ 
 L_i)$ or equivalently as (qualification condition holds)
\begin{equation}
\text{find}\quad x\in \R^N\quad\text{such that}\quad0\in\partial 
f(x)+\sum_{i=1}^3L_i^*\partial g_i(L_i x),
\end{equation}
which is a particular instance of \eqref{e:vured}, in view of 
\cite[Theorem~20.25]{1}.
Moreover, for every $\T>0$, $J_{\T \partial f}=(\id+\T 
R^*R)^{-1}(\id-\T R^*b)$, for 
every $i\in\{1,2,3\}$,
$J_{\T (\partial g_i)^{-1}}=\T(\id-\prox_{g_i/\T})(\id/\T)$, 
$\prox_{g_3/\T}=P_{[0,255]^N}$, and, for $i\in\{1,2\}$,
$\prox_{g_i/\T}=\prox_{\alpha\|\cdot\|_1/\T}$ is the component-wise 
soft thresholder, computed in 
\cite[Example~24.34]{1}. Note that $(\id+\T R^*R)^{-1}$
 can be computed efficiently via a diagonalization of 
$R$ using the fast Fourier transform $F$ 
\cite[Section 4.3]{deblur}. 
Altogether, Remark~\ref{r:optim}.\eqref{r:optim2} allows us to 
write algorithm in \eqref{e:algPDSum} as Algorithm~\ref{alg:TV} below, 
where we set $\varUpsilon=\tau\id$, $\Sigma_1=\sigma_1\id$, 
$\Sigma_2=\sigma_2\id$, and  $\Sigma_3=\sigma_3\id$, for $\T>0$, $\s_1>0$, $\s_2>0$,
and $\s_3>0$. We denote by $\mathcal{R}$  the primal-dual 
error
\begin{equation}
	\label{e:defR}
\mathcal{R} : (x_+,u_+,x,u) \mapsto 
\sqrt{\frac{\|(x_+,u_+)-(x,u)\|^2}{\|(x,u)\|^2}}
\end{equation} 
and by $\varepsilon>0$ the convergence tolerance.

\begin{algorithm}[H]
\caption{} 
\label{alg:TV}
\begin{algorithmic}[1]
\STATE{Fix $x_0, u_{1,0},u_{2,0}$, and $u_{3,0}$  in 
$\R^{N}$,  let $\tau$, $\sigma_1$, $\sigma_2$,
and $\sigma_3$ be in $\RPP$, let $(\lambda_n)_{n\in\N}$ in 
$\left[0,2\right]$ 
such that $\sum_{n\in\N}\lambda_n(2-\lambda_n)=\pinf$, and fix $r_0 
> \varepsilon>0$. }
\WHILE{ $r_n > \varepsilon $}
\STATE{$p_{n+1}= (\id+\T R^*R)^{-1}(x_n-\T (D_1^*u_{1,n} + D_2^* 
u_{2,n}+u_{3,n}+ 
R^*b))$}
\STATE{$x_{n+1}= (1-\lambda_n) x_{n} +  \lambda_n p_{n+1}$}
\STATE{$q_{1,n+1}= 
\s_1(\id-\prox_{\alpha\|\cdot\|_1/\s_1})(u_{1,n}/\s_1+D_1(2p_{n+1}-x_n))$}
\STATE{$q_{2,n+1}= 
\s_2(\id-\prox_{\alpha\|\cdot\|_1/\s_2})(u_{2,n}/\s_1+D_2(2p_{n+1}-x_n))$}
\STATE{$q_{3,n+1}=\s_3\big( \id-  P_{[0,255]^N}\big)\left( 
u_{3,n}/\s_3+ 2p_{n+1}-x_n\right)$}
\STATE{$
\begin{array}{l}
	\left\lfloor
	\begin{array}{l} \text{for } i=1,2,3\\
	u_{i,n+1}= (1-\lambda_n) u_{i,n} +\lambda_n q_{i,n+1}
	\end{array}
\right.
\end{array}
$}
\STATE{ 
$r_n=\mathcal{R}\big((x_{n+1},u_{1,n+1},u_{2,n+1},u_{3,n+1}),
(x_n,u_{1,n},u_{2,n},u_{3,n})\big)$ }
\ENDWHILE 
\RETURN{$(x_{n+1},u_{1,n+1},u_{2,n+1},u_{3,n+1})$}
\end{algorithmic}
\end{algorithm}
In this case, \eqref{e:CondialgPDSum} reduces to 
\begin{equation}
\label{e:boundary}
\T(\s_1\|D_1\|^2+\s_2\|D_2\|^2+\s_3)\le 1
\end{equation}
and the closed range condition is trivially satisfied. By using the power 
iteration 
 \cite{vonMises} with tolerance 
$10^{-9}$, we obtain 
$\|D_1\|^2=\|D_2\|^2 \approx 3.9998$.

 Observe that, when $\s_1=\s_2=\s_3=\s$, Algorithm~\ref{alg:TV} 
 reduces 
to the algorithm proposed in \cite{cp} (when $\s\T(\|D_1\|^2+\|D_2\|^2+1)<1$)
or \cite[Theorem~3.3]{condat} (algorithm denoted by \texttt{condat}), 
where the case 
$\s\T(\|D_1\|^2+\|D_2\|^2+1)=1$  is included. 

Since in \cite[Section~5.1]{SDR}, the critical step-sizes achieve the 
best performance,
we provide a numerical experiment which compare the efficiency
of Algorithm~\ref{alg:TV} for different values of the parameters $\T, 
\s_1,\s_2$, and $\s_3$ in the boundary of \eqref{e:boundary} and 
different relaxation parameters 
$\lambda_n$. In particular we compare with the case 
$\s_1=\s_2=\s_3=\s$ (\texttt{condat}), which 
turns out to be more efficient 
than other methods as AFBS \cite{Molinari2019}, MS \cite{skew}, 
Condat-V\~u \cite{condat,vu} in this context \cite[Section~5.1]{SDR}. 
For these 
comparisons, we consider the test image $\overline{x}$ shown in
Figure~\ref{fig:imreal} of $256\times 256$ pixels 
($N_1=N_2=256$) inspired in \cite[Section~5]{Yang21}. 
The 
operator blur $R$ is set as a Gaussian blur of size $9\times 9$ and 
standard 
deviation 4 (applied by MATLAB function 
\textit{fspecial}) and the observation $b$ is obtained by
$b=R\overline{x}+e\in \R^{m_1\times m_2}$, where $m_1=m_2=256$ 
and $e$ is an 
additive zero-mean white Gaussian noise with standard deviation 
$10^{-3}$ (using \textit{imnoise} function in MATLAB). We generate 
20 random realizations of the random variable $e$ leading to 20
observations $(b_i)_{1\le i\le 20}$. 
%As an example, 
%Figure~\ref{fig:imblur} shows the 
%blurred and noisy observation $b_{13}$.

We study 
the efficiency of Algorithm~\ref{alg:TV} for different values of 
$\T,\s_1,\s_2$, and $\s_3$ and relaxation steps 
$\lambda_n\equiv\lambda \in \{1,1.5,1.9\}$. In order to approximate 
the best performant step-sizes in the boundary of \eqref{e:boundary}, 
we consider $\T \in \mathcal{C}:=\{0.10+0.05\cdot 
n\}_{n=0,\ldots,10}$  and 
$\s_1=\s_2=\s_3=\s=\T/(1+\|D_1\|^2+\|D_2\|^2)$ in the case of 
\texttt{condat}. In the case of 
Algorithm~\ref{alg:TV} we consider $\s_1=\gamma_1 
(1-\gamma_2)/(\T \|D_1\|^2) $,  $\s_2=(1-\gamma_1) 
(1-\gamma_2)/(\T \|D_2\|^2) $, $\s_3=\gamma_2/\T$, where $(\T, 
\gamma_1, \gamma_2) \in \mathcal{C}\times  \{0.01, 0.005, 0.001 
\}\times  \{0.5, 0.55, 0.6, 0.65\}$.

In Table~\ref{T:tol8franjas} we provide the average number of 
iterations
obtained by applying Algorithm~\ref{alg:TV} for solving \eqref{pro:TV} 
considering the 20 observations 
$(b_i)_{1\le i\le 20}$ and the best set of step-sizes found with 
the procedure above. The tolerance 
is set as $\varepsilon = 10^{-8}$. 
We observe that Algorithm~\ref{alg:TV} becomes more 
efficient in iterations as long as 
the relaxation parameters are larger. The case $\lambda=1.9$ 
achieves the tolerance in approximately 35\% less iterations than the 
case  $\lambda=1$. By choosing different
parameters $\s_1,\s_2$, and $\s_3$, the algorithm achieves 
the tolerance in approximately 6\% less iterations than \texttt{condat}.

 This conclusion is confirmed in
Figure~\ref{f:TV_comp}, which shows the performance 
obtained 
with the observation $b_{4}$. This figure also shows that both 
algorithms achieve in less iterations the optimal objective value for 
higher relaxation parameters, with a slight advantage of 
Algorithm~\ref{alg:TV}. Note that, since the algorithms under study 
has the same structure, the CPU time by iteration is very similar. 

In Figure~\ref{f:imagenesTV} we provide the images reconstructed 
from observation $b_4$ by using \texttt{condat} and 
Algorithm~\ref{alg:TV} 
after 
300 iterations. The best reconstruction, in terms of 
objective value $F^{TV}$ and PSNR (Peak signal-to-noise ratio) is 
obtained by Algorithm~\ref{alg:TV}.

\begin{table}
{\footnotesize	 \caption{Averages number of iterations for 
Algorithm~\ref{alg:TV} with $\T(\s_1\|D_1\|^2+\s_2\|D_1\|^2+\s_3)= 1$ 
and
 \texttt{condat} with tolerance $10^{-8}$.}\label{T:tol8franjas}
\begin{center}
\begin{tabular}{|c|c|c|c|c|c|c|}\cline{6-7}
\multicolumn{5}{c}{}  & \multicolumn{2}{|c|}{$\varepsilon=10^{-8}$} 
\\ \hline
\multicolumn{1}{|c|}{Algorithm}  & $\T$     &  $\s_{1}$ &  $\s_{2}$ & 
$\lambda_n$ & Av. Time(s) & Av. Iter. \\\hline
\multirow{9}{*}{Alg. \ref{alg:TV}} & 0.2  & 0.7425 & 0.4950 & 
\multirow{4}{*}{$1$}  & 82.6373 & 8844 \\
& 0.2  & 0.7463 & 0.4975 & & 82.2817 & 8827  \\
& 0.2  & 0.7493 & 0.4995 & & 82.5722 & 8833\\\cline{2-7}
& 0.2  & 0.7425 & 0.4950 &  \multirow{4}{*}{$1.5$}  & 63.1338 & 6766 
\\
& 0.2  & 0.7463 & 0.4975 & & 63.0645 & 6754  \\
& 0.2  & 0.7493 & 0.4995 & & 63.0996 & 6758  \\\cline{2-7}
 & 0.2  & 0.8044 & 0.4331 &  \multirow{4}{*}{$1.9$}  & 53.9059 & 
 5770  \\
 & 0.2  & 0.8085 & 0.4353 & & 53.8663 & 5767  \\
& 0.2  & 0.8117 & 0.4371 & & 53.8022 & 5761 \\ \hline
\multirow{3}{*}{\texttt{condat}} & 0.2 & - & - & 1 &  92.7997 & 9326 \\ 
& 0.2 & - & - & 1.5 & 67.0886 & 7131  \\ 
& 0.2 & - & - & 1.9 & 57.8523 &  6121  \\ 
\hline
\end{tabular}
\end{center} }
\end{table}

\begin{figure}
\centering
\subfloat[]{\includegraphics[scale=0.3]{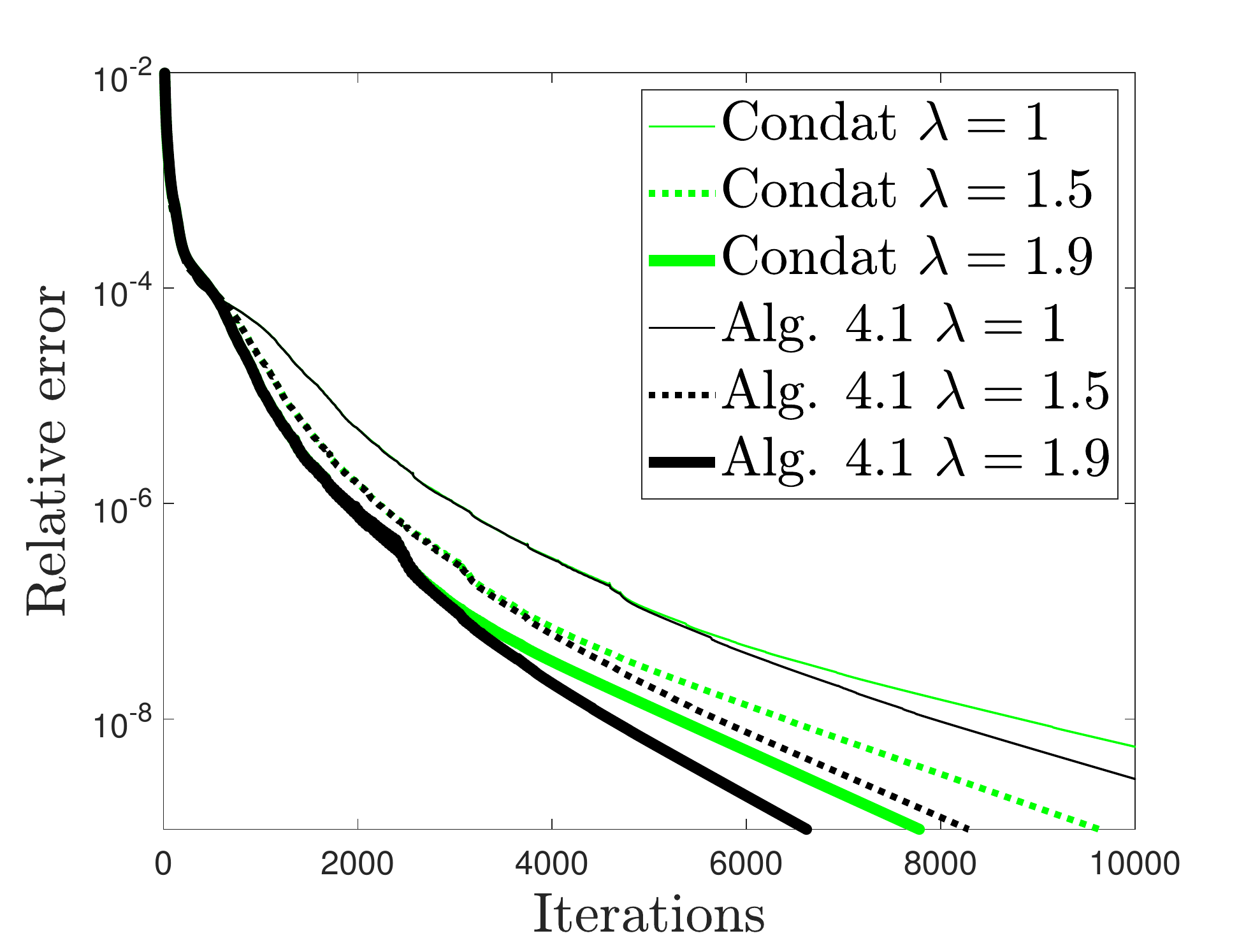}\label{f:comp2a}}
\subfloat[]{\includegraphics[scale=0.3]{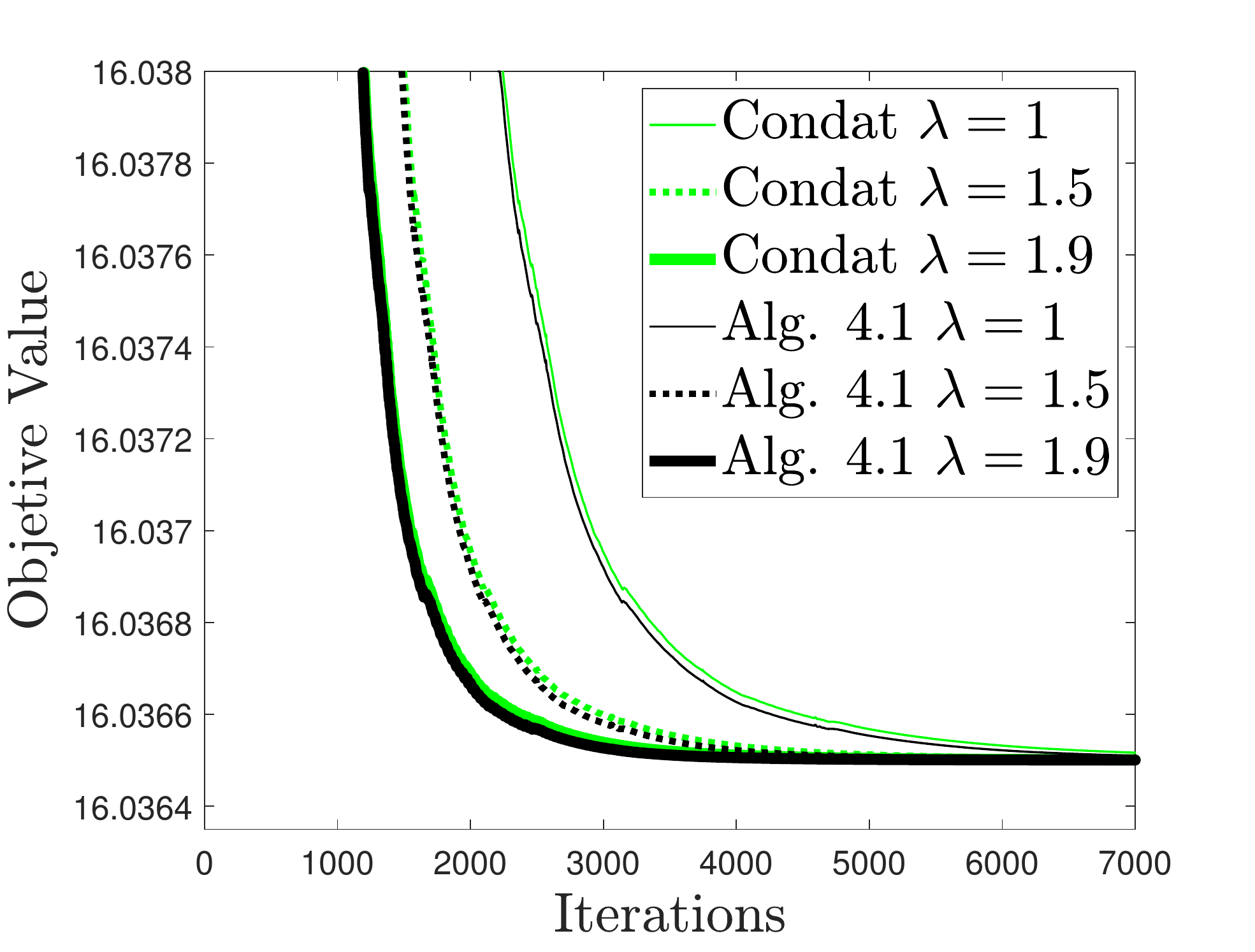}\label{f:comp2b}}
\caption{{Comparison of Algorithm~\ref{alg:TV} with 
$\T(\s_1\|D_1\|^2+\s_2\|D_1\|^2+\s_3)= 1$ and \texttt{condat} 
(observation $b_{4}$). }}
\label{f:TV_comp}
\end{figure}

\begin{figure}
	\centering
\subfloat[\scriptsize Original, $F^{TV}(\overline{x})= 17.8430$ 
]{\label{fig:imreal}\includegraphics[scale=0.5]{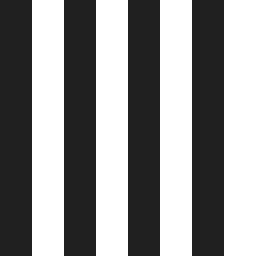}}\quad \quad
\subfloat[\scriptsize Blurry/noisy $b_{4}$, $F^{TV}(b)= 80.5807$,
PSNR=$18.4842$]{\label{fig:imblur}\includegraphics[scale=0.5]{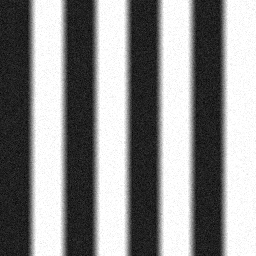}}\\
\subfloat[\scriptsize \texttt{condat} ,  
$F^{TV}(x_{300})=16.3503$, 
PSNR=$28.5957$.]{\label{fig:imAFBS}\includegraphics[scale=0.5]{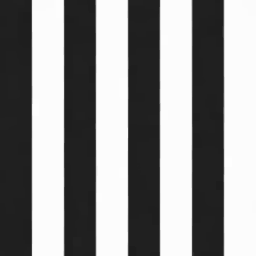}}\quad
 \quad
\subfloat[\scriptsize Alg. \ref{alg:TV}, $F^{TV}(x_{300})=16.3473$, 
PSNR=$28.6349$. 
]{\label{fig:imMS}\includegraphics[scale=0.5]{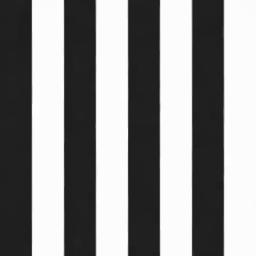}}
\caption{Reconstructed image, after 300 iterations, from blurred and 
noisy image using \texttt{condat}  and Alg. 
\ref{alg:TV} in their best cases, respectively, cases and $\lambda=1.9$ .}
\label{f:imagenesTV}
\end{figure}

\section*{Acknowledgments}
The first author thanks the support of ANID under grant FONDECYT 1190871 
and grant Redes 180032. The second author thanks the support 
of ANID-Subdirección de Capital  Humano/Doctorado 
Nacional/2018-21181024 and by the Direcci\'on de Postgrado y Programas from 
UTFSM through Programa de Incentivos a la Iniciaci\'on Cient\'ifica (PIIC).

\section{Appendix}
\label{sec:app}
\begin{prop}
	\label{prop:propiedad}
In the context of Problem~\ref{prob2}, set $L=\id$, let 
$\Upsilon\colon\H\to\H$ be a strongly monotone self adjoint linear 
bounded operator, 
set $\Lambda\colon\H\times\H\to\H\colon (x,u)\mapsto x-\Upsilon u$, 
let $\vv$, $\WW$,  and $G_{\Upsilon,B,A}$
be the operators defined in \eqref{e:defV}, \eqref{e:defW},
and \eqref{e:opDRS}, respectively. 
Then, 
$\Lambda(\fix (P_{\rvv}\circ J_{\WW}))=\fix G_{\Upsilon,B,A}$.
\end{prop}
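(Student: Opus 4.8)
The plan is to establish the two inclusions separately, noting that one direction is essentially already in hand. The inclusion $\Lambda(\fix(P_{\rvv}\circ J_{\WW}))\subset \fix G_{\Upsilon,B,A}$ is precisely the content of the computation \eqref{e:incfixpoints} in the proof of Proposition~\ref{teo:DRour}, which relied only on the intertwining identity \eqref{e:relopsDR}, i.e. $\Lambda\circ J_{\WW}=G_{\Upsilon,B,A}\circ\Lambda$, together with $\Lambda\circ P_{\rvv}=\Lambda$. I would simply invoke it. All the work therefore lies in the reverse inclusion $\fix G_{\Upsilon,B,A}\subset \Lambda(\fix(P_{\rvv}\circ J_{\WW}))$.

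For the reverse inclusion, the key structural fact I would isolate first is that $\Lambda$ restricts to a bijection from $\rvv$ onto $\H$. Indeed, \eqref{e:propLamb} gives $\rvv=\ran\Lambda^*=(\ker\Lambda)^\perp$, so $\Lambda$ is injective on $\rvv$, while it is surjective by construction; hence $\Lambda|_{\rvv}\colon\rvv\to\H$ is a linear bijection. Equivalently, in view of the decomposition \eqref{e:descdsum}, for each $w\in\H$ the point $P_{\rvv}\tilde w$ (for any $\tilde w$ with $\Lambda\tilde w=w$) is the unique element of $\rvv$ sent to $w$ by $\Lambda$.

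With this in hand, I would fix $\hat z\in\fix G_{\Upsilon,B,A}$ and take $(\hat y,\hat v)\in\rvv$ to be its unique $\Lambda$-preimage in $\rvv$. Applying \eqref{e:relopsDR} and $G_{\Upsilon,B,A}\hat z=\hat z$ yields $\Lambda(J_{\WW}(\hat y,\hat v))=G_{\Upsilon,B,A}(\Lambda(\hat y,\hat v))=\hat z$; composing with $P_{\rvv}$ and using $\Lambda\circ P_{\rvv}=\Lambda$ gives $\Lambda(P_{\rvv}J_{\WW}(\hat y,\hat v))=\hat z=\Lambda(\hat y,\hat v)$. Since both $P_{\rvv}J_{\WW}(\hat y,\hat v)$ and $(\hat y,\hat v)$ lie in $\rvv$ and $\Lambda|_{\rvv}$ is injective, they coincide, so $(\hat y,\hat v)\in\fix(P_{\rvv}\circ J_{\WW})$ with $\Lambda(\hat y,\hat v)=\hat z$, giving $\hat z\in\Lambda(\fix(P_{\rvv}\circ J_{\WW}))$.

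I expect the only delicate point to be the injectivity of $\Lambda$ on $\rvv$: this is exactly what lets one transport the fixed point equation for $G_{\Upsilon,B,A}$ on $\H$ back to that for $P_{\rvv}\circ J_{\WW}$ on $\rvv$, and it rests on the identification $\rvv=\ran\Lambda^*$ from \eqref{e:propLamb}. Once that bijection is secured, both inclusions follow immediately from \eqref{e:relopsDR} and $\Lambda P_{\rvv}=\Lambda$, so I anticipate no further obstacle.
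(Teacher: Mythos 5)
Your proof is correct and follows essentially the same route as the paper: both directions rest on the intertwining identity \eqref{e:relopsDR} together with the identification $\rvv=\ran\Lambda^*$ from \eqref{e:propLamb}--\eqref{e:descdsum}, and the reverse inclusion is obtained by lifting $\hat z\in\fix G_{\Upsilon,B,A}$ to its unique $\Lambda$-preimage in $\rvv$. The only difference is presentational: where you argue abstractly via injectivity of $\Lambda|_{\rvv}$, the paper makes the inverse explicit, setting $(\hat x,\hat u)=\Lambda^*(\id+\Upsilon^2)^{-1}\hat z$ and using the formula $P_{\rvv}=\Lambda^*(\id+\Upsilon^2)^{-1}\Lambda$ to verify the fixed-point equation directly.
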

\begin{proof} 
The inclusion $\subset$
is proved in \eqref{e:incfixpoints}. Conversely, 
since $\Lambda^*\colon z\mapsto 
(z,-\Upsilon z)$, we have $\Lambda\circ 
\Lambda^*=\id+\Upsilon^2$ and
\cite[Proposition~3.30 \& Example 3.29]{1} yields $P_{\rvv}=P_{\ran 
\Lambda^*}=\Lambda^*(\id+\Upsilon^2)^{-1}
\Lambda$. Therefore, if $\hat{z}\in \fix G_{\Upsilon,B,A}$, by setting 
$(\hat{x},\hat{u}):=\Lambda^*(\id+\Upsilon^2)^{-1}\hat{z}$, 
we have
$\hat{z}=\Lambda(\hat{x},\hat{u})$ and we deduce from 
\eqref{e:relopsDR}
that
\begin{align}
P_{\rvv}\circ 
J_{\WW}(\hat{x},\hat{u})&=\Lambda^*(\id+\Upsilon^2)^{-1}
\Lambda(J_{\WW}(\hat{x},\hat{u}))
\nonumber\\
&= 
\Lambda^*(\id+\Upsilon^2)^{-1}G_{\Upsilon,B,A}
(\Lambda(\hat{x},\hat{u}))\nonumber\\ 
&= \Lambda^*(\id+\Upsilon^2)^{-1}G_{\Upsilon,B,A}\hat{z}\nonumber\\ 
&=\Lambda^*(\id+\Upsilon^2)^{-1} \hat{z}\nonumber\\
&=(\hat{x},\hat{u}).
\end{align} 
Consequently, $(\hat{x},\hat{u})\in\fix(P_{\rvv}\circ 
J_{\WW})$ and 
 $\hat{z}=\Lambda(\hat{x},\hat{u})\in
 \Lambda(\fix(P_{\rvv}\circ 
J_{\WW}))$. 
  \end{proof}


\begin{thebibliography}{10}
\bibitem{Aubin}
{\sc Aubin, J.-P., and Frankowska, H.}
\newblock {\em Set-valued analysis}.
\newblock Modern Birkh\"{a}user Classics. Birkh\"{a}user Boston, Inc., Boston,
  MA, 2009.

\bibitem{BauschkeFirmly}
{\sc Bauschke, H.~H.}
\newblock New demiclosedness principles for (firmly) nonexpansive operators.
\newblock In {\em Computational and analytical mathematics}, vol.~50 of {\em
  Springer Proc. Math. Stat.} Springer, New York, 2013, pp.~19--28.

\bibitem{1}
{\sc Bauschke, H.~H., and Combettes, P.~L.}
\newblock {\em Convex analysis and monotone operator theory in {H}ilbert
  spaces}, second~ed.
\newblock CMS Books in Mathematics/Ouvrages de Math\'{e}matiques de la SMC.
  Springer, Cham, 2017.

\bibitem{BausMoursi}
{\sc Bauschke, H.~H., and Moursi, W.~M.}
\newblock On the {D}ouglas-{R}achford algorithm.
\newblock {\em Math. Program. 164}, 1-2, Ser. A (2017), 263--284.

\bibitem{BOT2}
{\sc Bo\c{t}, R.~I., Csetnek, E.~R., and Heinrich, A.}
\newblock A primal-dual splitting algorithm for finding zeros of sums of
  maximal monotone operators.
\newblock {\em SIAM J. Optim. 23}, 4 (2013), 2011--2036.

\bibitem{bot2013}
{\sc Bo\c{t}, R.~I., and Hendrich, C.}
\newblock A {D}ouglas-{R}achford type primal-dual method for solving inclusions
  with mixtures of composite and parallel-sum type monotone operators.
\newblock {\em SIAM J. Optim. 23}, 4 (2013), 2541--2565.

\bibitem{Nets1}
{\sc Brice\~{n}o, L., Cominetti, R., Cort\'{e}s, C.~E., and Mart\'{\i}nez, F.}
\newblock An integrated behavioral model of land use and transport system: a
  hyper-network equilibrium approach.
\newblock {\em Netw. Spat. Econ. 8}, 2-3 (2008), 201--224.

\bibitem{skew}
{\sc Brice\~{n}o Arias, L.~M., and Combettes, P.~L.}
\newblock A monotone + skew splitting model for composite monotone inclusions
  in duality.
\newblock {\em SIAM J. Optim. 21}, 4 (2011), 1230--1250.

\bibitem{Nash}
{\sc Brice\~{n}o Arias, L.~M., and Combettes, P.~L.}
\newblock Monotone operator methods for {N}ash equilibria in non-potential
  games.
\newblock In {\em Computational and analytical mathematics}, vol.~50 of {\em
  Springer Proc. Math. Stat.} Springer, New York, 2013, pp.~143--159.

\bibitem{Siopt2}
{\sc Brice\~{n}o Arias, L.~M., and Davis, D.}
\newblock Forward-backward-half forward algorithm for solving monotone
  inclusions.
\newblock {\em SIAM J. Optim. 28}, 4 (2018), 2839--2871.

\bibitem{SDR}
{\sc Briceño-Arias, L.~M., and Roldán, F.}
\newblock Split-{D}ouglas-{R}achford algorithm for composite monotone
  inclusions and split-{ADMM}.
\newblock https://arxiv.org/abs/2101.11683.

\bibitem{TV-chambolle}
{\sc Chambolle, A., Caselles, V., Cremers, D., Novaga, M., and Pock, T.}
\newblock An introduction to total variation for image analysis.
\newblock In {\em Theoretical Foundations and Numerical Methods for Sparse
  Recovery}, vol.~9 of {\em Radon Ser. Comput. Appl. Math.} Walter de Gruyter,
  Berlin, 2010, pp.~263--340.

\bibitem{ChambLions97}
{\sc Chambolle, A., and Lions, P.-L.}
\newblock Image recovery via total variation minimization and related problems.
\newblock {\em Numer. Math. 76}, 2 (1997), 167--188.

\bibitem{cp}
{\sc Chambolle, A., and Pock, T.}
\newblock A first-order primal-dual algorithm for convex problems with
  applications to imaging.
\newblock {\em J. Math. Imaging Vision 40}, 1 (2011), 120--145.

\bibitem{Pustelnik2019}
{\sc Colas, J., Pustelnik, N., Oliver, C., Abry, P., G{\'e}minard, J.-C., and
  Vidal, V.}
\newblock {Nonlinear denoising for characterization of solid friction under low
  confinement pressure}.
\newblock {\em {Physical Review E } 42\/} (2019), 91.

\bibitem{ipa}
{\sc Combettes, P.~L.}
\newblock Quasi-{F}ej\'{e}rian analysis of some optimization algorithms.
\newblock In {\em Inherently parallel algorithms in feasibility and
  optimization and their applications ({H}aifa, 2000)}, vol.~8 of {\em Stud.
  Comput. Math.} North-Holland, Amsterdam, 2001, pp.~115--152.

\bibitem{opti04}
{\sc Combettes, P.~L.}
\newblock Solving monotone inclusions via compositions of nonexpansive averaged
  operators.
\newblock {\em Optimization 53}, 5-6 (2004), 475--504.

\bibitem{combvu}
{\sc Combettes, P.~L., and V\~{u}, B.~C.}
\newblock Variable metric quasi-{F}ej\'{e}r monotonicity.
\newblock {\em Nonlinear Anal. 78\/} (2013), 17--31.

\bibitem{vuvarmet}
{\sc Combettes, P.~L., and V\~{u}, B.~C.}
\newblock Variable metric forward-backward splitting with applications to
  monotone inclusions in duality.
\newblock {\em Optimization 63}, 9 (2014), 1289--1318.

\bibitem{condat}
{\sc Condat, L.}
\newblock A primal-dual splitting method for convex optimization involving
  {L}ipschitzian, proximable and linear composite terms.
\newblock {\em J. Optim. Theory Appl. 158}, 2 (2013), 460--479.

\bibitem{Daube04}
{\sc Daubechies, I., Defrise, M., and De~Mol, C.}
\newblock An iterative thresholding algorithm for linear inverse problems with
  a sparsity constraint.
\newblock {\em Comm. Pure Appl. Math. 57}, 11 (2004), 1413--1457.

\bibitem{Davisconvrate}
{\sc Davis, D.}
\newblock Convergence rate analysis of primal-dual splitting schemes.
\newblock {\em SIAM J. Optim. 25}, 3 (2015), 1912--1943.

\bibitem{eckstein-bertsekas}
{\sc Eckstein, J., and Bertsekas, D.~P.}
\newblock On the {D}ouglas-{R}achford splitting method and the proximal point
  algorithm for maximal monotone operators.
\newblock {\em Math. Program. 55}, 3, Ser. A (1992), 293--318.

\bibitem{Fuku96}
{\sc Fukushima, M.}
\newblock The primal {D}ouglas-{R}achford splitting algorithm for a class of
  monotone mappings with application to the traffic equilibrium problem.
\newblock {\em Math. Program. 72}, 1, Ser. A (1996), 1--15.

\bibitem{gabay83}
{\sc Gabay, D.}
\newblock Chapter ix applications of the method of multipliers to variational
  inequalities.
\newblock In {\em Augmented Lagrangian Methods: Applications to the Numerical
  Solution of Boundary-Value Problems}, M.~Fortin and R.~Glowinski, Eds.,
  vol.~15 of {\em Studies in Mathematics and Its Applications}. Elsevier, 1983,
  pp.~299 -- 331.

\bibitem{GafniBert84}
{\sc Gafni, E.~M., and Bertsekas, D.~P.}
\newblock Two-metric projection methods for constrained optimization.
\newblock {\em SIAM J. Control Optim. 22}, 6 (1984), 936--964.

\bibitem{GlowinskyMorrocco75}
{\sc Glowinski, R., and Marrocco, A.}
\newblock Sur l'approximation, par \'{e}l\'{e}ments finis d'ordre un, et la
  r\'{e}solution, par p\'{e}nalisation-dualit\'{e}, d'une classe de probl\`emes
  de {D}irichlet non lin\'{e}aires.
\newblock {\em Rev. Fran\c{c}aise Automat. Informat. Recherche
  Op\'{e}rationnelle S\'{e}r. Rouge Anal. Num\'{e}r. 9}, {\rm R}-2 (1975),
  41--76.

\bibitem{Goldstein64}
{\sc Goldstein, A.~A.}
\newblock Convex programming in {H}ilbert space.
\newblock {\em Bull. Amer. Math. Soc. 70\/} (1964), 709--710.

\bibitem{deblur}
{\sc Hansen, P.~C., Nagy, J.~G., and O'Leary, D.~P.}
\newblock {\em Deblurring images: Matrices, spectra, and filtering}, vol.~3 of
  {\em Fundamentals of Algorithms}.
\newblock Society for Industrial and Applied Mathematics (SIAM), Philadelphia,
  PA, 2006.

\bibitem{yuan}
{\sc He, B., and Yuan, X.}
\newblock Convergence analysis of primal-dual algorithms for a saddle-point
  problem: from contraction perspective.
\newblock {\em SIAM J. Imaging Sci. 5}, 1 (2012), 119--149.

\bibitem{lions-mercier79}
{\sc Lions, P.-L., and Mercier, B.}
\newblock Splitting algorithms for the sum of two nonlinear operators.
\newblock {\em SIAM J. Numer. Anal. 16}, 6 (1979), 964--979.

\bibitem{wavelet}
{\sc Mallat, S.}
\newblock {\em A wavelet tour of signal processing: The sparse way}, third~ed.
\newblock Elsevier/Academic Press, Amsterdam, 2009.

\bibitem{martinet1970}
{\sc Martinet, B.}
\newblock Br\`eve communication. r\'egularisation d'in\'equations
  variationnelles par approximations successives.
\newblock {\em ESAIM: Mathematical Modelling and Numerical Analysis -
  Mod\'elisation Math\'ematique et Analyse Num\'erique 4}, R3 (1970), 154--158.

\bibitem{vonMises}
{\sc Mises, R.~V., and Pollaczek-Geiringer, H.}
\newblock Praktische verfahren der gleichungsauflösung .
\newblock {\em ZAMM - Journal of Applied Mathematics and Mechanics /
  Zeitschrift für Angewandte Mathematik und Mechanik 9}, 2 (1929), 152--164.

\bibitem{Molinari2019}
{\sc Molinari, C., Peypouquet, J., and Roldan, F.}
\newblock Alternating forward-backward splitting for linearly constrained
  optimization problems.
\newblock {\em Optim. Lett. 14}, 5 (2020), 1071--1088.

\bibitem{CPockDiag}
{\sc {Pock}, T., and {Chambolle}, A.}
\newblock Diagonal preconditioning for first order primal-dual algorithms in
  convex optimization.
\newblock In {\em 2011 International Conference on Computer Vision\/} (Nov
  2011), pp.~1762--1769.

\bibitem{rockafellar1976}
{\sc Rockafellar, R.~T.}
\newblock Monotone operators and the proximal point algorithm.
\newblock {\em SIAM J. Control Optim. 14}, 5 (1976), 877--898.

\bibitem{ROF}
{\sc Rudin, L.~I., Osher, S., and Fatemi, E.}
\newblock Nonlinear total variation based noise removal algorithms.
\newblock {\em Phys. D 60}, 1-4 (1992), 259--268.
\newblock Experimental mathematics: computational issues in nonlinear science
  (Los Alamos, NM, 1991).

\bibitem{Showalter}
{\sc Showalter, R.~E.}
\newblock {\em Monotone Operators in {B}anach Space and Nonlinear Partial
  Differential Equations}, vol.~49 of {\em Mathematical Surveys and
  Monographs}.
\newblock American Mathematical Society, Providence, RI, 1997.

\bibitem{svaiter}
{\sc Svaiter, B.~F.}
\newblock On weak convergence of the {D}ouglas-{R}achford method.
\newblock {\em SIAM J. Control Optim. 49}, 1 (2011), 280--287.

\bibitem{vu}
{\sc V\~{u}, B.~C.}
\newblock A splitting algorithm for dual monotone inclusions involving
  cocoercive operators.
\newblock {\em Adv. Comput. Math. 38}, 3 (2013), 667--681.

\bibitem{Yang21}
{\sc Yang, Y., Tang, Y., Wen, M., and Zeng, T.}
\newblock Preconditioned douglas-rachford type primal-dual method for solving
  composite monotone inclusion problems with applications.
\newblock {\em Inverse Probl. Imaging 15}, 4 (2021), 787--825.




\end{thebibliography}
\end{document}